\newtheorem{thm}{Theorem}[section]
\newtheorem{exa}{Example}[section]
\newtheorem{lem}{Lemma}[section]
\journal{International Journal of Computer Mathematics}
\begin{document}

\begin{frontmatter}



\title{A Fourth-Order Compact ADI Scheme for Two-Dimensional Riesz Space Fractional Nonlinear Reaction-Diffusion Equation}

\author{Dongdong Hu}
\author{Xuenian Cao\corref{cor1}}
\ead{cxn@xtu.edu.cn}

\cortext[cor1]{Corresponding author}
\address{School of Mathematics and Computational Science, Xiangtan University, Xiangtan 411105, PR China.}

\begin{abstract}
In this paper, a second-order backward difference formula (abbr. BDF2) is used to approximate first-order time partial derivative, the Riesz fractional derivatives are approximated by fourth-order compact operators, a class of new alternating-direction implicit difference scheme (abbr. ADI) is constructed for two-dimensional Riesz space fractional nonlinear reaction-diffusion equation. Stability and convergence of the numerical method are analyzed. Numerical experiments demonstrate that the proposed method is effective.

\end{abstract}

\begin{keyword}
Riesz space fractional derivative; BDF2 formula; Fourth-order compact operator; ADI scheme; Stability; Convergence



\end{keyword}

\end{frontmatter}

\section{Introduction}\label{1}
In this paper, we consider two-dimensional Riesz space fractional nonlinear reaction-diffusion equation \cite{space_time_one_order,space_time_one_two_order,spectralmethod,FitzHugh_Nagumo,Iyiola,Bueno-Orovio,Lin}
\begin{equation*}\label{eq1.1}
\begin{array}{ll}
\frac{\partial u(x,y,t)}{\partial t}={\kappa}_1 \frac{\partial^\alpha u(x,y,t)}{\partial |x|^\alpha}+{\kappa}_2\frac{\partial^\beta u(x,y,t)}{\partial |y|^\beta}+g(x,y,t,u(x,y,t)), \quad  (x,y,t) \in \Omega\times (0,T],
\end{array}\tag{1.1}
\end{equation*}
with the boundary and initial conditions
\begin{equation*}\label{eq1.2}
\begin{array}{ll}
u(x,y,t)=0,\quad (x,y,t)\in \partial{\Omega} \times (0,T],
\end{array}\tag{1.2}
\end{equation*}
\begin{equation*}\label{eq1.3}
\begin{array}{ll}
u(x,y,0)=\varphi(x,y), \quad (x,y)\in \overline{\Omega}=\partial{\Omega}\cup \Omega ,
\end{array}\tag{1.3}
\end{equation*}
where $1<\alpha,~\beta < 2$ and $\Omega=(a,b)\times(c,d)$, the diffusion coefficients $\kappa_1$, $\kappa_2$ are positive constants, $\varphi(x,y)$ is a known sufficiently smooth function, $g(x,y,t,u)$ satisfies the Lipschitz condition
\begin{equation*}\label{eq1.4}
|g(x,y,t,u)-g(x,y,t,\upsilon)|\leq L|u-\upsilon|,\forall u,\upsilon\in \mathbb{R},\tag{1.4}
\end{equation*}
here $L$ is Lipschitz constant, and Riesz fractional derivatives $\frac{\partial ^{\alpha}u(x,y,t)}{ \partial \left|  x  \right|^{\alpha} }$ and $\frac{\partial ^{\beta}u(x,y,t)}{ \partial \left|  y  \right|^{\beta} }$ are defined by
\begin{equation*}
\begin{array}{ll}
\frac{\partial ^{\alpha}u(x,y,t)}{ \partial \left|  x  \right|^{\alpha} }=c_{\alpha}\big(  {}_{a}^{R}\!D^{\alpha}_{x}+{}_{x}^{R}\!D^{\alpha}_{b}  \big)u(x,y,t)
,\quad
\frac{\partial ^{\beta}u(x,y,t)}{ \partial \left|  y  \right|^{\beta} }=c_{\beta}\big(  {}_{c}^{R}\!D^{\beta}_{y}+{}_{y}^{R}\!D^{\beta}_{d}  \big)u(x,y,t),
\end{array}
\end{equation*}
where $c_\gamma=-\frac{1}{2\cos(\frac{\pi\gamma}{2})}, \gamma=\alpha,~\beta$,
symbols ${}_{a}^{R}\!D^{\alpha}_{x}$,  ${}_{x}^{R}\!D^{\alpha}_{b}$, ${}_{c}^{R}\!D^{\beta}_{y}$ and ${}_{y}^{R}\!D^{\beta}_{d}$ denote left and right Riemann-Liouville fractional derivative operators, which are defined by \cite{define,define1,define2,define3}
\begin{equation*}\label{eq1.5}
{}_{a}^{R}\!D^{\alpha}_{x}u(x,y,t)=\frac{1}{\Gamma(2-\alpha)}\frac{\partial^2}{\partial x^2}\int_{a}^{x}u(\xi,y,t)(x-\xi)^{1-\alpha}d\xi,
\end{equation*}
\begin{equation*}\label{eq1.6}
{}_{x}^{R}\!D^{\alpha}_{b}u(x,y,t)=\frac{1}{\Gamma(2-\alpha)}\frac{\partial^2}{\partial x^2}\int_{x}^{b}u(\xi,y,t)(\xi-x)^{1-\alpha}d\xi,
\end{equation*}
\begin{equation*}\label{eq1.7}
{}_{c}^{R}\!D^{\beta}_{y}u(x,y,t)=\frac{1}{\Gamma(2-\beta)}\frac{\partial^2}{\partial y^2}\int_{c}^{y}u(x,\xi,t)(y-\xi)^{1-\beta}d\xi,
\end{equation*}
\begin{equation*}\label{eq1.8}
{}_{y}^{R}\!D^{\beta}_{d}u(x,y,t)=\frac{1}{\Gamma(2-\beta)}\frac{\partial^2}{\partial y^2}\int_{y}^{d}u(x,\xi,t)(\xi-y)^{1-\beta}d\xi,
\end{equation*}
where  ${\Gamma}$($\cdot$) is Gamma function.
\par
In this paper, we assumed that the problem \eqref{eq1.1}-\eqref{eq1.3} has a unique solution $u(x,y,t)\in C^{6,6,3}_{x,y,t}(~[a,b]\times[c,d]\times[0,T]~)$.  We also supposed for the fixed $t\in[0,T]$ and $y\in[c,d]$, $\widetilde{u}(x,\cdot,\cdot)\in \mathscr{C}^{4+\alpha}(\mathbb{R})$, for the fixed $t\in[0,T]$ and $x\in[a,b]$, $\widetilde{u}(\cdot,y,\cdot)\in \mathscr{C}^{4+\beta}(\mathbb{R})$, where $\widetilde{u}(x,\cdot,\cdot)$ and $\widetilde{u}(\cdot,y,\cdot)$ are defined as
\begin{equation*}
\widetilde{u}(x,\cdot,\cdot)=\left \{
\begin{array}{cl}
u(x,\cdot,\cdot),\qquad & x\in [a,b],\\
0,\qquad &  \mathbb{R} \backslash [a,b],
\end{array}
\right.
\qquad
\qquad
{
\widetilde{u}(\cdot,y,\cdot)=\left \{
\begin{array}{cl}
u(\cdot,y,\cdot),\qquad & y\in [c,d],\\
0,\qquad &  \mathbb{R} \backslash [c,d],
\end{array}
\right.
}
\end{equation*}
here $\mathscr{C}^{4+\alpha}(\mathbb{R})$ and $\mathscr{C}^{4+\beta}(\mathbb{R})$ are of the form
\begin{equation*}
\mathscr{C}^{4+\gamma}(\mathbb{R})=\Big\{ v| v\in L_1(\mathbb{R}) ,\int^{+\infty}_{-\infty}(1+|\varpi|)^{4+\gamma}|\widehat{v}(\varpi)|d\varpi<\infty \Big\},\qquad \gamma=\alpha,~\beta,
\end{equation*}
where $\widehat{v}(\varpi)$ is represented as the Fourier transformation of $v(x)$ and defined by
\begin{equation*}
\widehat{v}(\varpi)=\int^{+\infty}_{-\infty} e^{-i\varpi x}v(x)dx,\quad i^2=-1.
\end{equation*}

\par
In recent years, two-dimensional Riesz space fractional nonlinear reaction-diffusion equation plays an essential role in describing the propagation of the electrical potential in heterogeneous cardiac tissue \cite{space_time_one_order,spectralmethod,Bueno-Orovio,space_time_one_two_order,FitzHugh_Nagumo}, it attracts many author's attention in constructing numerical methods for problems of the form \eqref{eq1.1}-\eqref{eq1.3}. For approximation of Riesz derivative, Meerschaert and Tadjeran \cite{GL} initially proposed the shifted Gr\"unwald-Letnikov approximation with first-order accuracy for Riemann-Liouville fractional derivative. Based on this approximation, Tian et al. \cite{wsGL2} estabilished a second-order weighted and shifted Gr\"unwald-Letnikov approximation for Riemann-Liouville fractional derivative, and the approximation was applied in Riesz space fractional advection-dispersion equations \cite{wsGL1}. Hao et al. \cite{quasicompact} constructed a class of new weighted and shifted Gr\"unwald-Letnikov approximation with second-order accuracy, and it was applied in \cite{midpoint} for fractional Ginzburg-Landau equation. Ortigueira \cite{centeroperator2} initially proposed the fractional centered difference method with second-order accuracy for Riesz fractional derivative, and this method was applied in Riesz space fractional partial differential equation \cite{centeroperator,BDFoperator,space_time_one_two_order,Aiguo_Xiao_nonlinear_Schr?dinger,Furati,Yousuf,Yi. Tang}. Ding and Li \cite{new_generating_functions} proposed a novel second-order approximation for Riesz derivative via constructing a new generating function, and this second-order approximation was adopted in \cite{Hengfei} for two-dimension Riesz
space-fractional diffusion equation. Recently, compact difference operator has been focused on the fractional differential equations for increasing the spatial accuracy. Zhou et al. \cite{thirdquasicompact} constructed a third-order quasi-compact difference scheme for Riemann-Liouville fractional derivative. Hao et al. \cite{quasicompact} and Zhao et al. \cite{positiveoperator} proposed fourth-order compact difference operators to approximate Riemann-Liouville and Riesz derivatives, respectively, these compact difference operators have a great contribution on promoting algorithm accuracy. During these years, there also has developed some approximations by finite element method \cite{FitzHugh_Nagumo,Y.J Choi,Burrage}, spectral method \cite{spectralmethod,Bueno-Orovio,Lin} et al..
 As we noticed, for the approximation of first-order time derivative, implicit Euler method \cite{W.C.Hong,semi-implicit-difference,space_time_one_order,Y.J Choi}, Crank-Nicolson method \cite{Aiguo_Xiao_nonlinear_Schr?dinger,FitzHugh_Nagumo,spectralmethod,quasicompact,centeroperator,wsGL1,Hengfei}, implicit midpoint method \cite{midpoint,XNCaoXCaoLWen} and BDF2 method \cite{Liyunfei,BDFoperator,BDF_Volterra} are usually used, Pad\'e approximations which are based on Runge-Kutta method are also used in recent researches \cite{Yousuf,Furati}. And these methods have their own advantages for time-dependent problems.
\par
There are some researches \cite{space_time_one_order,space_time_one_two_order,spectralmethod,FitzHugh_Nagumo,Bueno-Orovio,Iyiola,Lin} on problem \eqref{eq1.1}-\eqref{eq1.3}. Liu et al. \cite{space_time_one_order,space_time_one_two_order} constructed two ADI finite difference schemes, where Riesz space derivatives were discretized by shifted Gr\"unwald-Letnikov formulae and fractional centered difference operators, respectively, implicit Euler method was applied to discretize time partial derivative, two proposed methods were proven to be stable and convergent. Bueno-Orovio et al. \cite{Bueno-Orovio} used Fourier spectral method to approximate Riesz space fractional derivative, implicit Euler method was adopted to discretize first-order time partial derivative, a semi-implicit Fourier spectral method was developed. Zeng et al. \cite{spectralmethod} and Bu et al. \cite{FitzHugh_Nagumo} applied Galerkin-Legendre spectral method and  Galerkin finite element method to approximate Riesz fractional derivative, respectively, two Crank-Nicolson ADI methods were established. Lin et al. \cite{Lin} used  a bivariate polynomial based
on shifted Gegenbauer polynomials method to approximate Riesz space fractional derivative, Runge-Kutta method of order 3 was applied to discretize the first-order time partial derivative, a Runge-Kutta Gegenbauer spectral method was constructed. Iyiola et al. also discussed several implicit-explicit schemes in \cite{Iyiola}. Because the computing scale of two dimensional diffusion equation problem is very big, so a more efficient algorithm is needed. So far, there are many high-order algorithms for Riesz fractional derivative, we noticed that the fourth-order fractional compact difference operator in \cite{positiveoperator} is symmetric positive definite under certain circumstance, it's helpful for us to analyze the stability and convergence. As we know, before ADI method is applied for solving two-dimensional nonlinear reaction-diffusion equation problem. the nonlinear source term needs to have linearized approximation. Therefore, how to deal with the nonlinear source term via linearized approximations \cite{space_time_one_order,space_time_one_two_order,Partially,Liyunfei} plays an important role in constructing ADI scheme for two-dimensional Riesz space fractional nonlinear reaction-diffusion equation. The objective of this paper is to try to use BDF method and the fourth-order fractional compact difference operator to construct a class of new high accuracy ADI scheme based on the first-order \cite{space_time_one_order} and second-order \cite{Partially} linearized approximations for nonlinear source term. Stability and convergence analysis are given by energy method.
\par
The outline of this paper is organized as follows. In \ref{sec2}, the numerical method is constructed for problem \eqref{eq1.1}-\eqref{eq1.3}. Then in \ref{sec3}, stability and convergence are discussed, respectively. In \ref{sec4}, we use the proposed method and these methods in literatures \cite{space_time_one_order,space_time_one_two_order} to solve the test problems. Numerical results show that the proposed method has high accuracy and efficiency.
\section{Numerical method}\label{sec2}
\par
Let $x_i=a+ih_x,~i=0,1,2,\cdots,M_1$, $y_j=c+jh_y,~j=0,1,2,\cdots,M_2$, $t_n=n\tau,~n=0,1,2,\cdots,N$, where $h_x=(b-a)/M_1$ and $h_y=(d-c)/M_2$ are spatial step sizes, $\tau=T/N$ denotes time step size. $u(x_i,y_j,t_n)$ and $u^n_{i,j}$ are exact solution and numerical solution of the problem \eqref{eq1.1}-\eqref{eq1.3} at $(x_i,y_j,t_n)$, respectively. We also denote $\overline{\Omega}_h=\{ (x_i,y_j)~|~0 \leq i \leq M_1,0\leq j \leq M_2 \}$, ~$\Omega_h=\overline{\Omega}_h\cap \Omega,$ and the boundary grid mesh is $\partial \Omega_h=\overline{\Omega}_h\cap \partial \Omega$.
\par
To discretize the Riesz space fractional derivative, we would introduce the centred difference operators which are defined by \cite{centeroperator2}
\begin{equation*}\label{eq2.1}
\Delta^{\alpha}_{x}u(x_i,y_j,t_n)=\frac{-1}{h_x^\alpha}\sum\limits^{i}_{k=i-M_1}g^{(\alpha)}_{k}u(x_i-kh_x,y_j,t_n),\tag{2.1}
\end{equation*}
and
\begin{equation*}\label{eq2.2}
\Delta^{\beta}_{y}u(x_i,y_j,t_n)=\frac{-1}{h_y^\beta}\sum\limits^{j}_{k=j-M_2}g^{(\beta)}_{k}u(x_i,y_j-kh_y,t_n),\tag{2.2}
\end{equation*}
where the coefficients $g^{(\gamma)}_{k}$ are determined by
\begin{equation*}\label{eq2.3}
~g^{(\gamma)}_0=\frac{\Gamma(\alpha+1)}{\Gamma^2(\alpha/2+1)},\quad g^{(\gamma)}_{k}=\Big( 1-\frac{\alpha+1}{\alpha/2+k}  \Big)g^{(\gamma)}_{k-1},\quad g^{(\gamma)}_{-k}=g^{(\gamma)}_{k}~,\quad\gamma=\alpha,~\beta,\quad k=1,2,\cdots,\tag{2.3}
\end{equation*}
then we have following lemma.
\begin{lem}\label{lem2.1}(see \cite{positiveoperator}.)
If $\widetilde{u}(x,\cdot,\cdot)\in \mathscr{C} ^{4+\alpha}(\mathbb{R})$, ~$\widetilde{u}(\cdot,y,\cdot)\in \mathscr{C} ^{4+\beta}(\mathbb{R})$, for the fixied step-sizes $h_x$ and $h_y$, it holds that
\begin{equation*}\label{eq2.4}
\begin{array}{ll}
\mathscr{B}^{\alpha}_x\frac{\partial^\alpha u(x_i,y_j,t_n)}{\partial |x|^\alpha}=\Delta^{\alpha}_xu(x_i,y_j,t_n)+O(h_x^4),\quad 1\leq i \leq M_1-1,1\leq j \leq M_2-1,0\leq n \leq N,\tag{2.4}
\end{array}
\end{equation*}
\begin{equation*}\label{eq2.5}
\begin{array}{cl}
\mathscr{B}^{\beta}_y\frac{\partial^\beta u(x_i,y_j,t_n)}{\partial |y|^\beta}=\Delta^{\beta}_yu(x_i,y_j,t_n)+O(h_y^4),\quad 1\leq i \leq M_1-1,1\leq j \leq M_2-1,0\leq n \leq N,\tag{2.5}
\end{array}
\end{equation*}
where the Fourth-order compact operators $\mathscr{B}^{\alpha}_x$ and $\mathscr{B}^{\beta}_y$ are defined as follows
\begin{equation*}
\mathscr{B}_x^{\alpha}u(x_i,y_j,t_n)=\left \{
\begin{array}{ll}
c^{\alpha}_2u(x_{i-1},y_j,t_n)+(1-2c^{\alpha}_2)u(x_i,y_j,t_n)+c^{\alpha}_2u(x_{i+1},y_j,t_n),\quad &1 \leq i \leq M_1-1,0\leq j\leq M_2,\\
u(x_{i},y_j,t_n),\quad & i=\{0,M_1\},0\leq j\leq M_2.
\end{array}
\right.
\end{equation*}
and
\begin{equation*}
\mathscr{B}_y^{\beta}u(x_i,y_j,t_n)=\left \{
\begin{array}{ll}
c^{\beta}_2u(x_i,y_{j-1},t_n)+(1-2c^{\beta}_2)u(x_i,y_j,t_n)+c^{\beta}_2u(x_i,y_{j+1},t_n),\quad &1 \leq j \leq M_2-1,0\leq i\leq M_1,\\
u(x_{i},y_j,t_n),\quad & j=\{0,M_2\},0\leq i\leq M_1,
\end{array}
\right.
\end{equation*}
where $c^\gamma_2=\frac{\gamma}{24} \in (\frac{1}{24},\frac{1}{12}),\gamma=\alpha,~\beta.$
\end{lem}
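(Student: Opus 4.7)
The plan is to prove both \eqref{eq2.4} and \eqref{eq2.5} by Fourier analysis; by symmetry I only describe the $x$-direction. The key idea is that the centered difference operator $\Delta^\alpha_x$, rewritten as a convolution on $\mathbb{R}$ after the zero extension $\widetilde{u}$, has a multiplier (symbol) that can be expanded in $h_x$. First I would fix $y=y_j$ and $t=t_n$, and invoke the inverse Fourier representation
\begin{equation*}
\widetilde{u}(x,y_j,t_n)=\frac{1}{2\pi}\int_{-\infty}^{+\infty} e^{i\varpi x}\widehat{\widetilde{u}}(\varpi,y_j,t_n)\,d\varpi,
\end{equation*}
which is legitimate because the hypothesis $\widetilde{u}(\cdot,y_j,t_n)\in\mathscr{C}^{4+\alpha}(\mathbb{R})$ gives the integrability required with weight $(1+|\varpi|)^{4+\alpha}$. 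Using the well-known Fourier symbol of the Riesz derivative, $\mathcal{F}\bigl[\tfrac{\partial^\alpha \widetilde{u}}{\partial|x|^\alpha}\bigr](\varpi)=-|\varpi|^\alpha\,\widehat{\widetilde{u}}(\varpi)$, the task reduces to comparing two Fourier multipliers.

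Next I would compute the symbol of $\Delta^\alpha_x$. Extending the sum in \eqref{eq2.1} to all integers (the zero-extension kills the extra terms for grid points $x_i$ in the interior), one recognizes the generating function of the $g_k^{(\alpha)}$: $\sum_{k\in\mathbb{Z}} g_k^{(\alpha)}e^{-ik\theta}=\bigl(2\sin(\theta/2)\bigr)^\alpha$, which is the standard identity underpinning the fractional centered difference of Ortigueira. Substituting $\theta=h_x\varpi$ gives
\begin{equation*}
\Delta^\alpha_x\widetilde{u}(x_i,y_j,t_n)=\frac{1}{2\pi}\int_{-\infty}^{+\infty} e^{i\varpi x_i}\,\mu_\alpha(h_x\varpi)\,\widehat{\widetilde{u}}(\varpi,y_j,t_n)\,d\varpi,\qquad \mu_\alpha(\theta)=-\Bigl(\tfrac{2\sin(\theta/2)}{h_x}\Bigr)^{\!\alpha}h_x^{\alpha}/h_x^\alpha\cdot\text{(sign factors)},
\end{equation*}
which after simplification yields the multiplier $-h_x^{-\alpha}\bigl(2\sin(h_x\varpi/2)\bigr)^\alpha$. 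Taylor-expanding $\bigl(2\sin(h_x\varpi/2)\bigr)^\alpha$ around $h_x\varpi=0$ gives
\begin{equation*}
-\Bigl(\tfrac{2\sin(h_x\varpi/2)}{h_x}\Bigr)^{\!\alpha}=-|\varpi|^\alpha\Bigl(1-\tfrac{\alpha}{24}(h_x\varpi)^2+O((h_x\varpi)^4)\Bigr),
\end{equation*}
so the symbol of $\Delta^\alpha_x$ equals the symbol of $\tfrac{\partial^\alpha}{\partial|x|^\alpha}$ minus $\tfrac{\alpha}{24}h_x^2$ times the symbol of $\tfrac{\partial^{\alpha+2}}{\partial|x|^{\alpha+2}}$, with a remainder controlled by $h_x^4|\varpi|^{4+\alpha}$.

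On the other hand, the three-point compact stencil $\mathscr{B}^\alpha_x$ has the symbol $1-2c_2^\alpha\sin^2(h_x\varpi/2)=1-\tfrac{\alpha}{24}(h_x\varpi)^2+O(h_x^4\varpi^4)$ when $c_2^\alpha=\alpha/24$. Applying $\mathscr{B}^\alpha_x$ to $\tfrac{\partial^\alpha \widetilde{u}}{\partial|x|^\alpha}$ multiplies the symbol $-|\varpi|^\alpha$ by this factor, and the leading $h_x^2$ corrections now agree exactly with those produced by $\Delta^\alpha_x$. Subtracting the two Fourier representations, the difference has multiplier of order $h_x^4|\varpi|^{4+\alpha}$, and then
\begin{equation*}
\bigl|\mathscr{B}^\alpha_x\tfrac{\partial^\alpha u(x_i,y_j,t_n)}{\partial|x|^\alpha}-\Delta^\alpha_x u(x_i,y_j,t_n)\bigr|\le \frac{h_x^4}{2\pi}\int_{-\infty}^{+\infty} C\,|\varpi|^{4+\alpha}\,|\widehat{\widetilde{u}}(\varpi,y_j,t_n)|\,d\varpi,
\end{equation*}
which is finite by the assumption $\widetilde{u}(\cdot,y_j,t_n)\in\mathscr{C}^{4+\alpha}(\mathbb{R})$, giving the claimed $O(h_x^4)$ bound uniformly in $i,j,n$.

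The routine parts are the Fourier inversion and the two Taylor expansions; the main obstacle, and what has to be done carefully, is the verification that the sum defining $\Delta^\alpha_x$ after zero extension really does coincide with the full convolution over $\mathbb{Z}$ so that the generating-function identity applies, together with matching the constant $\alpha/24$ exactly between the compact stencil and the expansion of $\bigl(2\sin(h_x\varpi/2)\bigr)^\alpha$; once that constant is pinned down the cancellation is forced. The estimate for $\mathscr{B}^\beta_y$ is identical after swapping variables.
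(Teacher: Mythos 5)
The paper gives no proof of its own here---it simply cites Zhao--Sun--Hao \cite{positiveoperator}---and your Fourier-multiplier argument is precisely the proof in that reference: identify the symbol $-h_x^{-\alpha}\bigl(2\sin(h_x\varpi/2)\bigr)^{\alpha}$ of the fractional centered difference via the generating function of the $g_k^{(\alpha)}$ (the zero extension making the finite sum a full convolution over $\mathbb{Z}$), match its $O(h_x^2)$ correction $-\tfrac{\alpha}{24}(h_x\varpi)^2|\varpi|^{\alpha}$ against the compact stencil with $c_2^{\alpha}=\alpha/24$, and bound the $O(h_x^4|\varpi|^{4+\alpha})$ remainder by the weighted integrability built into $\mathscr{C}^{4+\alpha}(\mathbb{R})$. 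One small slip worth fixing: the symbol of the three-point stencil is $1-4c_2^{\alpha}\sin^2(h_x\varpi/2)$, not $1-2c_2^{\alpha}\sin^2(h_x\varpi/2)$; your subsequent expansion $1-\tfrac{\alpha}{24}(h_x\varpi)^2+O(h_x^4\varpi^4)$ is nonetheless the correct one, so the claimed cancellation and the uniform $O(h_x^4)$ bound stand.
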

Before approximating the first-order partial derivative, we would introduce the properties of BDF operator.
\begin{lem}\label{lem2.2}(see \cite{BDFoperator}.)
For any positive integer $n$, if $u(\cdot,\cdot,t)\in C^3([0,T])$, then
\begin{equation*}\label{eq2.6}
\frac{\partial u(x_i,y_j,t_n)}{\partial t}=D_t^{(2)}u(x_i,y_j,t_n)+r^n_{i,j},\tag{2.6}
\end{equation*}
where
\begin{equation*}
D_t^{(2)}u(x_i,y_j,t_n)=\left \{
\begin{array}{ll}
\delta_tu(x_i,y_j,t_{\frac12}),& n=1,\\
\frac32\delta_tu(x_i,y_j,t_{n-\frac12})-\frac12\delta_tu(x_i,y_j,t_{n-\frac32}), & n\geq 2,
\end{array}
\right.
\end{equation*}
 $$\delta_tu(x_i,y_j,t_{n-\frac12})=\frac{1}{\tau}\Big(u(x_i,y_j,t_n)-u(x_i,y_j,t_{n-1})\Big),$$
and $r^n_{i,j}$ satisfies
\begin{equation*}\label{eq2.7}
\big|r^n_{i,j}\big| = \left \{
\begin{array}{ll}
O(\tau) , & n=1,\\
O(\tau^2), & n\geq 2.
\end{array}\tag{2.7}
\right.
\end{equation*}
\end{lem}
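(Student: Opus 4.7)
The plan is to establish both error estimates by direct Taylor expansion about $t_n$, using that the hypothesis $u(\cdot,\cdot,t)\in C^3([0,T])$ supplies uniformly bounded Taylor remainders of order up to three on the compact space-time domain $[a,b]\times[c,d]\times[0,T]$.

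First I would dispose of the starting case $n=1$, which is a simple backward difference. Writing $\delta_tu(x_i,y_j,t_{1/2})=\tau^{-1}[u(x_i,y_j,t_1)-u(x_i,y_j,t_0)]$ and expanding $u(x_i,y_j,t_0)$ about $t=t_1$ by Taylor's theorem with Lagrange remainder of order two yields
\[
\delta_tu(x_i,y_j,t_{1/2})=\partial_t u(x_i,y_j,t_1)-\tfrac{\tau}{2}\partial_t^2 u(x_i,y_j,\xi_1)
\]
for some $\xi_1\in(t_0,t_1)$. Since $\partial_t^2 u$ is continuous on a compact set, this immediately gives $|r^1_{i,j}|=O(\tau)$ uniformly in $(i,j)$.

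For $n\ge 2$, I would first rewrite the operator in its classical three-point form by algebraic simplification of $\tfrac{3}{2}\delta_tu(t_{n-1/2})-\tfrac{1}{2}\delta_tu(t_{n-3/2})$:
\[
D_t^{(2)}u(x_i,y_j,t_n)=\frac{3u(x_i,y_j,t_n)-4u(x_i,y_j,t_{n-1})+u(x_i,y_j,t_{n-2})}{2\tau}.
\]
Then I would Taylor-expand $u(x_i,y_j,t_{n-1})$ and $u(x_i,y_j,t_{n-2})$ about $t_n$ to second order with $C^3$ Lagrange remainders and collect terms weighted by the coefficients $\{3,-4,1\}$. These coefficients are designed so that the zeroth-order contributions cancel (because $3-4+1=0$), the first-order contributions sum to exactly $2\tau\,\partial_t u(x_i,y_j,t_n)$, and the second-order contributions cancel (since $-4\cdot\tfrac{1}{2}\tau^2+1\cdot 2\tau^2=0$). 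What remains is a combination of values of $\partial_t^3 u$ at intermediate points in $(t_{n-2},t_n)$ multiplied by $\tau^3$; dividing by $2\tau$ gives the $O(\tau^2)$ truncation bound, again uniformly in $(i,j)$.

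No substantive obstacle arises; the argument is purely computational. The only step requiring genuine attention is the second-order cancellation, which is precisely what distinguishes BDF2 from a generic two-step backward difference quotient. As a cross-check, one may instead derive the formula by differentiating at $t_n$ the quadratic Lagrange interpolant of the triples $\{(t_{n-2},u^{n-2}),(t_{n-1},u^{n-1}),(t_n,u^n)\}$; the $O(\tau^2)$ truncation order then follows from the standard polynomial-interpolation error estimate for $C^3$ functions, bypassing the bookkeeping entirely.
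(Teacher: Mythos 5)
Your argument is correct and is the standard one: the paper itself gives no proof of this lemma, simply citing \cite{BDFoperator}, and the proof there is precisely the Taylor-expansion (equivalently, quadratic-interpolant) computation you carry out, with the coefficient cancellations $3-4+1=0$, $4\tau-2\tau=2\tau$, and $-2\tau^2+2\tau^2=0$ all checked correctly. Nothing further is needed.
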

The nonlinear source term can be treated by following process
\begin{equation*}
g(x_i,y_j,t_n,u(x_i,y_j,t_n))= \left \{
\begin{array}{ll}
g\big(x_i,y_j,t_1,u(x_i,y_j,t_0)\big)+O(\tau),\quad & n=1,\\
g\big(x_i,y_j,t_n,2u(x_i,y_j,t_{n-1})-u(x_i,y_j,t_{n-2})\big)+O(\tau^2),\quad & n\geq 2.
\end{array}
\right.
\end{equation*}
Let
\begin{equation*}
\widehat{g}(x_i,y_j,t_n,u(x_i,y_j,t_n))= \left \{
\begin{array}{ll}
g\big(x_i,y_j,t_1,u(x_i,y_j,t_0)\big),\quad & n=1,\\
g\big(x_i,y_j,t_n,2u(x_i,y_j,t_{n-1})-u(x_i,y_j,t_{n-2})\big),\quad & n\geq 2.
\end{array}
\right.
\end{equation*}
At the point ($x_i,y_j,t_n$) \eqref{eq1.1} becomes
\begin{equation*}\label{eq2.8}
\begin{array}{ll}
\frac{\partial u(x_i,y_j,t_n)}{\partial t}=\kappa_{1}\frac{\partial^\alpha u(x_i,y_j,t_n)}{\partial |x|^\alpha}+\kappa_{2}\frac{\partial^\beta u(x_i,y_j,t_n)}{\partial |y|^\beta}+g(x_i,y_j,t_n,u(x_i,y_j,t_n)),\quad (x_i,y_j)\in {\Omega}_h ,1\leq n \leq N.
\end{array}\tag{2.8}
\end{equation*}
Multiplying by $\mathscr{B}^{\alpha}_x \mathscr{B}^{\beta}_y$ in \eqref{eq2.8}, we obtain from \autoref{lem2.1} that
\begin{equation*}\label{eq2.9}
\begin{array}{ll}
\mathscr{B}^{\alpha}_x \mathscr{B}^{\beta}_y\frac{\partial u(x_i,y_j,t_n)}{\partial t}=\mathscr{B}^{\beta}_y\delta^{\alpha}_xu(x_i,y_j,t_n)+\mathscr{B}^{\alpha}_x\delta^{\beta}_yu(x_i,y_j,t_n)+\mathscr{B}^{\alpha}_x \mathscr{B}^{\beta}_yg(x_i,y_j,t_n,u(x_i,y_j,t_n))+O(h_x^4+h_y^4),
\end{array}\tag{2.9}
\end{equation*}
where $\delta^{\alpha}_{x}=\kappa_1\Delta^{\alpha}_{x}$ and $\delta^{\alpha}_{y}=\kappa_2\Delta^{\alpha}_{y}$.
\par
Substituting  \eqref{eq2.6} into \eqref{eq2.9}, we obtain
\begin{equation*}\label{eq2.10}
\begin{array}{ll}
\mathscr{B}^{\alpha}_x \mathscr{B}^{\beta}_yD_t^{(2)}u(x_i,y_j,t_n)=\mathscr{B}^{\beta}_y\delta^{\alpha}_xu(x_i,y_j,t_n)+\mathscr{B}^{\alpha}_x\delta^{\beta}_yu(x_i,y_j,t_n)\\
\qquad\qquad\qquad\qquad\qquad+\mathscr{B}^{\alpha}_x \mathscr{B}^{\beta}_yg(x_i,y_j,t_n,u(x_i,y_j,t_n))-\mathscr{B}^{\alpha}_x \mathscr{B}^{\beta}_yr^n_{i,j}+O(h_x^4+h_y^4).
\end{array}\tag{2.10}
\end{equation*}
Adding a small error term $\tau^2\sigma_n^2\delta^{\alpha}_x\delta^{\beta}_yD_t^{(2)} u(x_i,y_j,t_n)$ on both side of \eqref{eq2.10}, we have
\begin{equation*}\label{eq2.11}
\begin{array}{ll}
\mathscr{B}^{\alpha}_x \mathscr{B}^{\beta}_yD_t^{(2)}u(x_i,y_j,t_n)+\tau^2\sigma_n^2\delta^{\alpha}_x\delta^{\beta}_yD_t^{(2)} u(x_i,y_j,t_n)=\mathscr{B}^{\beta}_y\delta^{\alpha}_xu(x_i,y_j,t_n)+\mathscr{B}^{\alpha}_x\delta^{\beta}_yu(x_i,y_j,t_n)\\
\qquad\qquad\qquad\qquad\qquad\qquad\qquad\qquad\qquad\qquad+\mathscr{B}^{\alpha}_x \mathscr{B}^{\beta}_y\widehat{g}(x_i,y_j,t_n,u(x_i,y_j,t_n))+\mathscr{R}^n_{i,j},
\end{array}\tag{2.11}
\end{equation*}
where $\sigma_1=1$ and $\sigma_n=\frac23,n\geq 2$. And there exists the positive constants $c_1$ and $c_2$ such that
\begin{equation*}\label{eq2.12}
\big|\mathscr{R}^n_{i,j}\big| \leq \left \{
\begin{array}{ll}
c_1(\tau+h_x^4+h_y^4) , & n=1,\\
c_2(\tau^2+h_x^4+h_y^4), & n\geq 2.
\end{array}\tag{2.12}
\right.
\end{equation*}
Omitting  the truncation error $\mathscr{R}^n_{i,j}$, we can obtain the numerical scheme for solving the problem \eqref{eq1.1}-\eqref{eq1.3} as follows
\begin{equation*}\label{eq2.13}
\begin{array}{ll}
\mathscr{B}^{\alpha}_x \mathscr{B}^{\beta}_yD_t^{(2)}u^{n}_{i,j}+\tau^2\sigma_n^2\delta^{\alpha}_x\delta^{\beta}_y D_t^{(2)}u^{n}_{i,j}=\mathscr{B}^{\beta}_y\delta^{\alpha}_xu^{n}_{i,j}+\mathscr{B}^{\alpha}_x\delta^{\beta}_yu^{n}_{i,j}+\mathscr{B}^{\alpha}_x \mathscr{B}^{\beta}_y g^n_{i,j},\quad (x_i,y_j)\in {\Omega}_h ,1\leq n \leq N,
\end{array}\tag{2.13}
\end{equation*}
where
\begin{equation*}
g^n_{i,j}= \left \{
\begin{array}{ll}
g\big(x_i,y_j,t_1,u^0_{i,j}\big),\quad & n=1,\\
g\big(x_i,y_j,t_n,2u^{n-1}_{i,j}-u^{n-2}_{i,j}\big),\quad & n\geq 2,
\end{array}
\right.
\end{equation*}
the boundary and initial conditions are
\begin{equation*}\label{eq2.14}
\begin{array}{ll}
u^n_{i,j}=0,\quad (x_i,y_j)\in \partial{\Omega}_h ,1\leq n \leq N,
\end{array}\tag{2.14}
\end{equation*}
\begin{equation*}\label{eq2.15}
\begin{array}{ll}
u^0_{i,j}=\varphi(x_i,y_j), \quad (x_i,y_j)\in \overline{\Omega}_h .
\end{array}\tag{2.15}
\end{equation*}
Multipling by $\tau\sigma_n$ in \eqref{eq2.13}, and factorizing it, we have
\begin{equation*}
\begin{array}{ll}\label{eq2.16}
(\mathscr{B}_x^{\alpha}-\tau\sigma_n\delta_x^{\alpha} )(\mathscr{B}_y^{\beta}-\tau\sigma_n \delta_y^{\beta} )u^{n}_{i,j}=\mathscr{H}u^{n-1}_{i,j}+\tau\sigma_n\mathscr{B}^{\alpha}_x \mathscr{B}^{\beta}_y g^n_{i,j},
\end{array}\tag{2.16}
\end{equation*}
where $\mathscr{H}u^{n-1}_{i,j}=(\mathscr{B}^{\alpha}_x \mathscr{B}^{\beta}_y+\tau^2\sigma^2_n\delta^{\alpha}_x\delta^{\beta}_y)(I-\sigma_n\tau D_t^{(2)})u^n_{i,j}$, and we can also rewrite it as follows
\begin{equation*}
\mathscr{H}u^{n-1}_{i,j}= \left \{
\begin{array}{ll}
\big(\mathscr{B}^{\alpha}_x \mathscr{B}^{\beta}_y+\tau^2\delta^{\alpha}_x\delta^{\beta}_y\big)u^0_{i,j},\quad & n=1,\\
\big(\mathscr{B}^{\alpha}_x \mathscr{B}^{\beta}_y+\frac49\tau^2\delta^{\alpha}_x\delta^{\beta}_y\big)\big(\frac43u^{n-1}_{i,j}-\frac13u^{n-2}_{i,j}\big),\quad & n\geq 2.
\end{array}
\right.
\end{equation*}
Introducing an intermediate variable $u^\ast_{i,j}$, let $u^\ast_{i,j}=(\mathscr{B}_y^{\beta}-\tau\sigma_n \delta_y^{\beta} )u^{n}_{i,j}$, therefore, we constructed a class of D'Yakonov ADI finite difference scheme for solving the problem \eqref{eq1.1}-\eqref{eq1.3} as follows

\begin{enumerate}[\textbf{Step} 1:]
\item
for the fixed $j\in\{ 1,2,\cdots,M_2-1 \}$, $\big\{ u^\ast_{i,j}\big| 1\leq i\leq M_1-1 \big\}$ can be calculated by
\begin{equation*}\label{eq2.17}
\begin{array}{ll}
(\mathscr{B}_x^{\alpha}-\tau\sigma_n\delta_x^{\alpha} )u^\ast_{i,j}=\mathscr{H}u^{n-1}_{i,j}+\tau\sigma_n\mathscr{B}^{\alpha}_x \mathscr{B}^{\beta}_y g^n_{i,j},\qquad 1\leq n \leq N,
\end{array}\tag{2.17}
\end{equation*}
with the boundary conditions
\begin{equation*}\label{eq2.18}
\begin{array}{rl}
u^\ast_{0,j}=(\mathscr{B}_y^{\beta}-\tau\sigma_n \delta_y^{\beta} )u^{n}_{0,j},\quad u^\ast_{M_1,j}=(\mathscr{B}_y^{\beta}-\tau\sigma_n \delta_y^{\beta} )u^{n}_{M_1,j},\qquad
1\leq j \leq M_2-1,\quad 1\leq n \leq N.
\end{array}\tag{2.18}
\end{equation*}
\item
for the fixed $i\in\{ 1,2,\cdots,M_1-1 \}$, $\big\{u^n_{i,j}\big| 1\leq j\leq M_2-1 \big\}$ can be obtained by
\begin{equation*}\label{eq2.19}
\begin{array}{ll}
(\mathscr{B}_y^{\beta}-\tau\sigma_n \delta_y^{\beta} )u^{n}_{i,j}=u^\ast_{i,j},\qquad 1\leq n \leq N,
\end{array}\tag{2.19}
\end{equation*}
the boundary and initial conditions are \eqref{eq2.14}-\eqref{eq2.15}.
\end{enumerate}
\section{Stability and convergence analysis}\label{sec3}
In order to analyze the stability and convergence of the method, we introduce some notations and lemmas.
\par
Let
$$\widehat{\gamma}_h=\Big\{ \zeta^n \Big|  \zeta^n=(\zeta_{0,0}^n,\cdots,\zeta_{M_1,0}^n,\cdots,\zeta_{0,M_2}^n,\cdots,\zeta_{M_1,M_2}^n ),~~\zeta^n_{i,j}=0~if~(x_i,y_j)\in \partial\Omega_h,0\leq n \leq N \Big\}.  $$
For any $u^n,v^n\in\widehat{\gamma}_h$, we define the following discrete inner product and corresponding norm
\begin{equation*}
\begin{array}{ll}
(u^n,v^n)=h_xh_y\sum\limits^{M_1-1}_{i=1}\sum\limits^{M_2-1}_{j=1}u_{i,j}^nv_{i,j}^n,\qquad \lVert u^n \rVert = \sqrt{(u^n,u^n)}.
\end{array}
\end{equation*}
\par
In addition, for any $u^n\in\widehat{\gamma}_h$, we denote $\big|u^n\big|=(|u_{0,0}^n|,\cdots,|u_{M_1,0}^n|,\cdots,|u_{0,M_2}^n|,\cdots,|u_{M_1,M_2}^n| )$, it implies $\big|u^n\big|\in\widehat{\gamma}_h $, and denote $u^n_{\ast,j}=(u^n_{0,j},\cdots,u^n_{M_1,j})$ and $u^n_{i,\ast}=(u^n_{i,0},\cdots,u^n_{i,M_2})$.
\\
\begin{lem}\label{lem3.1}(see \cite{BDFoperator}.)
For any positive integer n and real vector $v=(v^0,v^1,\cdots,v^n) \in \mathbb{R}^{n+1}$, we have
\begin{equation*}
\begin{array}{ll}
\frac{4\tau}{3}\sum\limits^n_{k=2}v^k(D_t^{(2)}v^k) \geq (v^n)^2-\frac13(v^{n-1})^2-(v^1)^2+\frac13(v^0)^2-\frac23(v^1-v^0)^2,& n \geq 2,\\
\frac{4\tau}{3}\sum\limits^n_{k=1}v^k(D_t^{(2)}v^k) \geq (v^n)^2-\frac13(v^{n-1})^2-\frac13(v^1)^2-\frac13(v^0)^2,& n \geq 1,\\
\frac{4\tau}{3}v^1(D_t^{(2)}v^1)=\frac23(v^{1})^2-\frac23(v^{0})^2+\frac23(v^1-v^0)^2,& n = 1.
\end{array}
\end{equation*}
\end{lem}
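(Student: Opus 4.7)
The plan is to reduce everything to a single algebraic identity for the BDF2 quadratic form
\[
2v^k\bigl(3v^k-4v^{k-1}+v^{k-2}\bigr)=\Bigl[(v^k)^2+(2v^k-v^{k-1})^2\Bigr]-\Bigl[(v^{k-1})^2+(2v^{k-1}-v^{k-2})^2\Bigr]+(v^k-2v^{k-1}+v^{k-2})^2,
\]
which is checked by expanding both sides. Since $D_t^{(2)}v^k=\frac{1}{2\tau}(3v^k-4v^{k-1}+v^{k-2})$ for $k\ge 2$, multiplying by $\frac{4\tau}{3}$ gives
\[
\frac{4\tau}{3}v^k D_t^{(2)}v^k=\tfrac{1}{3}\Bigl\{\bigl[(v^k)^2+(2v^k-v^{k-1})^2\bigr]-\bigl[(v^{k-1})^2+(2v^{k-1}-v^{k-2})^2\bigr]+(v^k-2v^{k-1}+v^{k-2})^2\Bigr\}.
\]

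I would first dispose of the case $n=1$: since $D_t^{(2)}v^1=(v^1-v^0)/\tau$, direct computation gives $\frac{4\tau}{3}v^1D_t^{(2)}v^1=\frac{4}{3}v^1(v^1-v^0)$, and the identity $\frac{4}{3}v^1(v^1-v^0)=\frac{2}{3}(v^1)^2-\frac{2}{3}(v^0)^2+\frac{2}{3}(v^1-v^0)^2$ is a one-line expansion. This settles the third equality in the lemma.

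Next, for the first inequality ($n\ge 2$), I would sum the displayed identity from $k=2$ to $n$. The bracketed terms telescope, leaving
\[
\sum_{k=2}^{n}\frac{4\tau}{3}v^k D_t^{(2)}v^k=\tfrac{1}{3}\Bigl[(v^n)^2+(2v^n-v^{n-1})^2-(v^1)^2-(2v^1-v^0)^2\Bigr]+\tfrac{1}{3}\sum_{k=2}^{n}(v^k-2v^{k-1}+v^{k-2})^2.
\]
The final sum of squares is nonnegative and can be dropped. The remaining boundary expression is then rewritten using the algebraic equalities
\[
\tfrac{1}{3}\bigl[(v^n)^2+(2v^n-v^{n-1})^2\bigr]=(v^n)^2-\tfrac{1}{3}(v^{n-1})^2+\tfrac{2}{3}(v^n-v^{n-1})^2,
\]
\[
\tfrac{1}{3}\bigl[(v^1)^2+(2v^1-v^0)^2\bigr]=(v^1)^2-\tfrac{1}{3}(v^0)^2+\tfrac{2}{3}(v^1-v^0)^2,
\]
and then dropping the nonnegative term $\tfrac{2}{3}(v^n-v^{n-1})^2$ on the left yields exactly the claimed lower bound. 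The second inequality ($n\ge 1$) is obtained simply by adding the already-established $n=1$ identity to the first inequality; the $\tfrac{2}{3}(v^1-v^0)^2$ contributions cancel and the remaining terms regroup to give the $-\tfrac{1}{3}(v^1)^2-\tfrac{1}{3}(v^0)^2$ boundary.

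The main obstacle, and really the only nontrivial step, is recognising the correct telescoping square $(v^k)^2+(2v^k-v^{k-1})^2$ associated to the BDF2 quadratic form; once this is guessed (or borrowed from the reference) the rest is bookkeeping. Everything else amounts to expanding squares, telescoping, and discarding nonnegative remainders, so no further analytic ingredient is needed.
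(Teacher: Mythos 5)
Your proof is correct: the telescoping identity $2v^k(3v^k-4v^{k-1}+v^{k-2})=[(v^k)^2+(2v^k-v^{k-1})^2]-[(v^{k-1})^2+(2v^{k-1}-v^{k-2})^2]+(v^k-2v^{k-1}+v^{k-2})^2$ checks out, the boundary regroupings are exact, and the $\tfrac23(v^1-v^0)^2$ cancellation in the $n\ge1$ case works as you describe. The paper itself gives no proof and simply cites Liao--Lyu--Vong, where the same standard BDF2 energy argument is used, so your argument is essentially the intended one.
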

 It is easy to check that $\mathscr{B}^{\alpha}_x$ and $\mathscr{B}^{\beta}_y$ are symmetric positive definite and self-adjoint \cite{positiveoperator}, Following from Lemma 3.11 in \cite{positiveoperator}, one can prove that there exists the fractional symmetric positive definite difference operators $Q_x $ and $ Q_y$ such that $\mathscr{B}^{\alpha}_x=(Q_x)^2$ and $\mathscr{B}^{\beta}_y=(Q_y)^2$, here, $Q_x$ and $Q_y$ are also commutable.
\\
\begin{lem}\label{lem3.2}(see \cite{positiveoperator}.)
For any $u^n \in \widehat{\gamma}_h,$ it holds that
\begin{equation*}\label{eq*}
\begin{array}{ll}
\frac13 \lVert u^n \rVert^2 \leq \lVert u^n \rVert_{\mathscr{B}}^2 \leq \lVert u^n \rVert^2,
\end{array}
\end{equation*}
where $\lVert u^n \rVert_{\mathscr{B}}=\sqrt{ ( \mathscr{B}_x^{\alpha} \mathscr{B}_y^{\beta}u^n,u^n ) }=\sqrt{ ( Q_xQ_yu^n,Q_xQ_yu^n ) }$.
\end{lem}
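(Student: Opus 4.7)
The plan is to establish the two inequalities via spectral bounds on the one-dimensional compact operators $\mathscr{B}_x^\alpha$ and $\mathscr{B}_y^\beta$ separately, and then promote them to the two-dimensional product by means of the commuting square-root factorization $\mathscr{B}_x^\alpha = Q_x^2$, $\mathscr{B}_y^\beta = Q_y^2$ recorded just above the lemma statement. Because the inner product is taken over $\widehat{\gamma}_h$, all grid functions in sight vanish on $\partial\Omega_h$, so the one-dimensional estimates can be applied slice-by-slice.

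First, I would rewrite the interior stencil of $\mathscr{B}_x^\alpha$ as $\mathscr{B}_x^\alpha v_i = v_i + c_2^\alpha(v_{i-1} - 2v_i + v_{i+1})$, exhibiting it as $I - c_2^\alpha L_x$, where $L_x$ is the standard symmetric positive semi-definite Dirichlet ``minus-Laplacian'' tridiagonal matrix on the interior $x$-grid. A one-line summation-by-parts together with the elementary inequality $(v_{i+1}-v_i)^2 \le 2(v_{i+1}^2 + v_i^2)$ gives $(L_x v,v) \le 4\|v\|^2$, and hence
\begin{equation*}
(1 - 4c_2^\alpha)\|v\|^2 \;\le\; (\mathscr{B}_x^\alpha v, v) \;\le\; \|v\|^2 .
\end{equation*}
Since $c_2^\alpha = \alpha/24 < 1/12$, the lower factor exceeds $2/3$, so the spectrum of $\mathscr{B}_x^\alpha$ lies in $(2/3, 1]$, and accordingly the spectrum of its SPD square root $Q_x$ lies in $(\sqrt{2/3}, 1]$. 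The same argument, run in the $y$ direction, provides the analogous bounds for $\mathscr{B}_y^\beta$ and $Q_y$.

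For the two-dimensional statement, I would then use commutation and self-adjointness of $Q_x$, $Q_y$ to rewrite
\begin{equation*}
\|u^n\|_{\mathscr{B}}^2 \;=\; (\mathscr{B}_x^\alpha \mathscr{B}_y^\beta u^n, u^n) \;=\; (Q_x Q_y u^n,\, Q_x Q_y u^n) \;=\; \|Q_x Q_y u^n\|^2.
\end{equation*}
Because $Q_x$ and $Q_y$ are self-adjoint, commute, and have spectra in $(\sqrt{2/3},\,1]$, their product $Q_x Q_y$ is self-adjoint with spectrum in $(2/3,\,1]$. A direct eigenvalue estimate then yields $(2/3)^2 \|u^n\|^2 \le \|Q_x Q_y u^n\|^2 \le \|u^n\|^2$, and since $4/9 > 1/3$ this is stronger than what is claimed.

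The only step that asks for even a little care is the spectral estimate $(L_x v,v) \le 4\|v\|^2$ on $\widehat{\gamma}_h$; the homogeneous Dirichlet data eliminates the usual boundary artifacts, so the bound reduces to an application of $2ab \le a^2+b^2$. Everything else is operator-theoretic bookkeeping that rides entirely on the commuting SPD factorization quoted from \cite{positiveoperator} just before the lemma, so I expect no substantive obstacle.
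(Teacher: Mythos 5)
Your proposal is correct. Note that the paper itself gives no proof of this lemma---it is imported verbatim from \cite{positiveoperator}---so there is nothing in-text to compare against; the argument in that reference is the same spectral one you give, except that it uses the exact eigenvalues $1-4c_2^{\gamma}\sin^2\bigl(k\pi/(2M)\bigr)$ of the tridiagonal Toeplitz matrix $I-c_2^{\gamma}L$ rather than your summation-by-parts/Gershgorin-type bound $(L_xv,v)\le 4\lVert v\rVert^2$; both routes place the one-dimensional spectra in $(2/3,1]$, whence the product operator $\mathscr{B}_x^{\alpha}\mathscr{B}_y^{\beta}$ has spectrum in $(4/9,1]$ and the stated (slightly weaker) constant $\tfrac13$ follows. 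Your version is marginally more elementary and, as you observe, actually proves the sharper lower bound $\tfrac49\lVert u^n\rVert^2\le\lVert u^n\rVert_{\mathscr{B}}^2$.
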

\begin{lem}\label{lem3.3}(see \cite{BDFoperator,positiveoperator,centeroperator}.)
For any $u^n \in \widehat{\gamma}_h,$ it holds that
\begin{equation*}
\begin{array}{ll}
(\delta^{\alpha}_xu_{\ast,j}^n,u_{\ast,j}^n):=-(\Lambda_xu_{\ast,j}^n,\Lambda_xu_{\ast,j}^n) \leq 0,\quad & 1\leq j \leq M_2-1,\\
(\delta^{\beta}_yu_{i,\ast}^n,u_{i,\ast}^n):=-(\Lambda_yu_{i,\ast}^n,\Lambda_yu_{i,\ast}^n) \leq 0,\quad & 1\leq i \leq M_1-1,
\end{array}
\end{equation*}
where $\Lambda_x$, $\Lambda_y$ are represented as fractional symmetric positive definite difference operators such that $-\delta^\alpha_x=(\Lambda_x)^2$, $-\delta^\beta_y=(\Lambda_y)^2$.
\par
It is easy to verify that $$(\delta^{\alpha}_x\delta^{\beta}_yu^n,u^n) \geq 0.$$
\end{lem}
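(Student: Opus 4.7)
The plan is to exploit the hypothesized factorizations $-\delta^\alpha_x=(\Lambda_x)^2$ and $-\delta^\beta_y=(\Lambda_y)^2$, together with the self-adjointness of $\Lambda_x$ and $\Lambda_y$, both of which are extracted from the spectral theory developed in \cite{positiveoperator}. Before computing anything I would verify that $-\delta^\alpha_x$, regarded as acting on $u^n_{\ast,j}$ with $u^n_{0,j}=u^n_{M_1,j}=0$, is represented by a real symmetric Toeplitz matrix that is positive definite (this is precisely Lemma~3.11 of \cite{positiveoperator}), so that the spectral theorem furnishes a unique self-adjoint positive definite square root $\Lambda_x$; the same reasoning yields $\Lambda_y$.

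Granted this, the one-dimensional bounds are immediate. Fixing $j$ with $1\leq j\leq M_2-1$, self-adjointness of $\Lambda_x$ gives
\begin{equation*}
(\delta^\alpha_x u^n_{\ast,j},u^n_{\ast,j}) \;=\; -((\Lambda_x)^2 u^n_{\ast,j},u^n_{\ast,j}) \;=\; -(\Lambda_x u^n_{\ast,j},\Lambda_x u^n_{\ast,j}) \;\leq\; 0,
\end{equation*}
and the same manipulation with $x\leftrightarrow y$, $\alpha\leftrightarrow\beta$ and $u^n_{\ast,j}$ replaced by $u^n_{i,\ast}$ yields the corresponding statement for $\delta^\beta_y$.

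For the two-dimensional inequality $(\delta^\alpha_x\delta^\beta_y u^n,u^n)\geq 0$, the decisive observation is that $\delta^\alpha_x$ and $\delta^\beta_y$ act on disjoint spatial indices and therefore commute as operators on $\widehat{\gamma}_h$; the same holds for their square roots $\Lambda_x$ and $\Lambda_y$. Consequently $\delta^\alpha_x\delta^\beta_y=(\Lambda_x)^2(\Lambda_y)^2=(\Lambda_x\Lambda_y)^2$, and the product $\Lambda_x\Lambda_y$ of commuting self-adjoint operators is itself self-adjoint, so
\begin{equation*}
(\delta^\alpha_x\delta^\beta_y u^n,u^n) \;=\; ((\Lambda_x\Lambda_y)^2 u^n,u^n) \;=\; (\Lambda_x\Lambda_y u^n,\Lambda_x\Lambda_y u^n) \;\geq\; 0.
\end{equation*}

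The main technical point—rather than an obstacle in the analytical sense—is justifying the factorizations with the correct self-adjointness and commutation properties; everything else reduces to invoking the definition of the induced inner product. Since the positive definiteness of the matrix associated with the centered fractional difference operator on zero-boundary vectors is already established in \cite{positiveoperator}, I would cite that result to obtain $\Lambda_x,\Lambda_y$, note that commutativity follows from their tensor-product structure in the $x$- and $y$-directions, and then the two displayed computations above complete the argument.
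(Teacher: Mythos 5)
Your argument is correct and is essentially the route the paper itself relies on: the paper does not prove this lemma but cites the symmetric positive definiteness of $-\delta^{\alpha}_x$ and $-\delta^{\beta}_y$ (Theorem~4.1 of \cite{centeroperator}), takes the self-adjoint positive definite square roots $\Lambda_x,\Lambda_y$, and uses their commutativity (from the tensor-product structure in the two coordinate directions) to get $(\delta^{\alpha}_x\delta^{\beta}_yu^n,u^n)=\lVert\Lambda_x\Lambda_yu^n\rVert^2\geq 0$, exactly as you do. The only nit is a citation detail: Lemma~3.11 of \cite{positiveoperator} is invoked in the paper for the square roots of the compact operators $\mathscr{B}^{\alpha}_x,\mathscr{B}^{\beta}_y$, while the positive definiteness of the centered fractional difference matrix itself comes from \cite{centeroperator}.
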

Theorem~4.1 in \cite{centeroperator} means that $-\delta^{\alpha}_x$ and $-\delta^{\beta}_y$  are symmetric positive definite operators. Therefore, with the help of commutativity of $\Lambda_x$ and $\Lambda_y$, we could introduce a semi-norm $\lVert u^n \rVert_{\delta}^2\triangleq(\delta^{\alpha}_x\delta^{\beta}_yu^n,u^n):=(\Lambda_x\Lambda_yu^n,\Lambda_x\Lambda_yu^n) $ be similar with \cite{positiveoperator,BDFoperator}.
\\
\begin{lem}\label{lem3.4}
For any $u^n \in \widehat{\gamma}_h,$ it holds that
\begin{equation*}
\begin{array}{ll}
(\mathscr{B}_y^{\beta}\delta^{\alpha}_xu^n,u^n) \leq 0,
\qquad(\mathscr{B}_x^{\alpha}\delta^{\beta}_yu^n,u^n) \leq 0.
\end{array}
\end{equation*}
\end{lem}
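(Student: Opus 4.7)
The plan is to exploit the factorizations $\mathscr{B}_x^{\alpha}=(Q_x)^2$ and $\mathscr{B}_y^{\beta}=(Q_y)^2$ provided in the discussion preceding Lemma~\ref{lem3.2}, together with the fact that operators acting in different coordinate directions commute ($Q_y$ and $\delta_x^{\alpha}$, respectively $Q_x$ and $\delta_y^{\beta}$) and that $Q_x,Q_y$ are self-adjoint. The strategy is to shift one copy of $Q_y$ across $\delta_x^{\alpha}$ onto the second slot of the inner product, thereby reducing the 2D bilinear form to something Lemma~\ref{lem3.3} already controls slice by slice.

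For the first inequality I would first compute
\[
(\mathscr{B}_y^{\beta}\delta_x^{\alpha} u^n, u^n) = ((Q_y)^2 \delta_x^{\alpha} u^n, u^n) = (\delta_x^{\alpha} Q_y u^n, Q_y u^n),
\]
where the last equality uses self-adjointness of $Q_y$ and the commutation relation $Q_y\delta_x^{\alpha}=\delta_x^{\alpha}Q_y$ (since $Q_y$ acts only on the $y$-index while $\delta_x^{\alpha}$ acts only on the $x$-index). Setting $v^n:=Q_y u^n$, I would then decompose the 2D inner product row by row as
\[
(\delta_x^{\alpha} v^n, v^n) = h_y \sum_{j=1}^{M_2-1} (\delta_x^{\alpha} v^n_{\ast,j}, v^n_{\ast,j}),
\]
and apply the first estimate of Lemma~\ref{lem3.3} to each slice to conclude that every 1D inner product on the right is $\leq 0$, hence the whole sum is $\leq 0$.

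The second inequality is entirely symmetric: I would factor $\mathscr{B}_x^{\alpha}=(Q_x)^2$, use that $Q_x$ commutes with $\delta_y^{\beta}$ and is self-adjoint to write
\[
(\mathscr{B}_x^{\alpha}\delta_y^{\beta} u^n, u^n) = (\delta_y^{\beta} Q_x u^n, Q_x u^n),
\]
and then apply the second estimate of Lemma~\ref{lem3.3} column by column to $w^n:=Q_x u^n$.

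The only delicate step I anticipate is verifying that $Q_y u^n$ (respectively $Q_x u^n$) still belongs to $\widehat{\gamma}_h$ so that the slice-wise hypotheses of Lemma~\ref{lem3.3} are legitimately in force: the vanishing of $u^n$ at $i=0,M_1$ is preserved because $Q_y$ is a $y$-only operator and never touches the $x$-boundary indices, and similarly for $Q_x$. This is essentially the tensor-product structure of the compact and centered operators on the interior grid; once it is noted, the rest of the argument is pure linear algebra.
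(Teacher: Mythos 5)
Your proposal is correct and follows essentially the same route as the paper: factor $\mathscr{B}_y^{\beta}=(Q_y)^2$, move one $Q_y$ to the other slot of the inner product to get $(\delta_x^{\alpha}Q_yu^n,Q_yu^n)$, decompose into rows $h_y\sum_{j}(\delta_x^{\alpha}Q_yu^n_{\ast,j},Q_yu^n_{\ast,j})$, and apply Lemma~\ref{lem3.3} slice by slice, with the second inequality by symmetry. Your added remark that $Q_yu^n$ stays in $\widehat{\gamma}_h$ is a detail the paper leaves implicit, but it does not change the argument.
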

\begin{proof}
From \ref{lem3.3}, we have
\begin{equation*}
\begin{array}{ll}
(\mathscr{B}_y^{\beta}\delta^{\alpha}_xu^n,u^n)=(\delta^{\alpha}_xQ_yu^n,Q_yu^n)=h_xh_y\sum\limits^{M_1-1}_{i=1}\sum\limits^{M_2-1}_{j=1}(\delta^{\alpha}_xQ_yu^n_{i,j})(Q_yu^n_{i,j})\\
~\quad\qquad\qquad=h_y\sum\limits^{M_2-1}_{j=1}(\delta^{\alpha}_xQ_yu^n_{\ast,j},Q_yu^n_{\ast,j})\\
~\quad\qquad\qquad\leq 0.
\end{array}
\end{equation*}
Similarly, we can obtain
$$(\mathscr{B}_x^{\alpha}\delta^{\beta}_yu^n,u^n) \leq 0.$$
The proof is completed.
\end{proof}

\begin{lem}\label{lem3.5}
(Discrete Bellman Inequality) ~~Let~$\rho_1,~\rho_2 \geqslant 0, ~\tau>0,~\epsilon_0,~\epsilon_1,~\cdots,~\epsilon_{\widehat{N}}$ are a series of nonnegative real numbers, satisfying
\begin{equation*}
\epsilon_{n} \leq \rho_2+\rho_1  \tau\sum\limits^{n-1}_{k=0}\epsilon_k,\quad n=1,\cdots ,\widehat{N},
\end{equation*}
then it holds that
\begin{equation*}
\epsilon_{n} \leq \rho_2 e^{\rho_1 n \tau},\quad n=1,\cdots ,\widehat{N}.
\end{equation*}
\end{lem}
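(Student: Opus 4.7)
The plan is a direct induction on $n$, using the standard device of majorising the sequence by a geometrically increasing one and then invoking $1+x\le e^x$.

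First I would introduce the auxiliary quantity $\eta_n := \rho_2 + \rho_1\tau\sum_{k=0}^{n-1}\epsilon_k$, so that the hypothesis reads $\epsilon_n\le\eta_n$ for $1\le n\le\widehat{N}$ and $\eta_n$ is manifestly nondecreasing. Subtracting two consecutive terms gives $\eta_{n+1}-\eta_n=\rho_1\tau\epsilon_n\le\rho_1\tau\eta_n$, whence
$$\eta_{n+1}\le(1+\rho_1\tau)\,\eta_n,\qquad n\ge 1.$$
Iterating this one-step bound $n-1$ times yields $\eta_n\le(1+\rho_1\tau)^{n-1}\eta_1$.

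Next I would close the argument with the elementary inequality $1+\rho_1\tau\le e^{\rho_1\tau}$ (valid since $\rho_1\tau\ge 0$), which gives $\eta_n\le e^{\rho_1(n-1)\tau}\eta_1$. The remaining step is to control the base value $\eta_1=\rho_2+\rho_1\tau\epsilon_0$ by $\rho_2 e^{\rho_1\tau}$; this rests on the initial bound $\epsilon_0\le\rho_2$ (which corresponds to the $n=0$ case of the summed hypothesis under the empty-sum convention), yielding $\eta_1\le\rho_2(1+\rho_1\tau)\le\rho_2 e^{\rho_1\tau}$. Chaining the two bounds produces $\epsilon_n\le\rho_2 e^{\rho_1 n\tau}$ for each $1\le n\le\widehat{N}$, as required.

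There is essentially no technical obstacle here; this is the classical discrete Gr\"onwall/Bellman inequality and the two-line induction above is the entire proof. The only mildly delicate point is the treatment of $\epsilon_0$: without an a priori control of the form $\epsilon_0\le\rho_2$ the stated conclusion already fails at $n=1$, so this implicit initial bound should be made explicit, or equivalently the summed hypothesis should be declared to hold at $n=0$ under the empty-sum convention.
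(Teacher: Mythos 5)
Your proof is correct, and it is the standard argument: the paper states this discrete Bellman/Gr\"onwall lemma without any proof or citation, so there is no in-paper argument to compare against; your majorant $\eta_n$, the one-step bound $\eta_{n+1}\le(1+\rho_1\tau)\eta_n$, and $1+x\le e^x$ are exactly what one would write. Your closing caveat is also a genuine observation rather than pedantry: as literally stated, with the hypothesis imposed only for $n\ge 1$, the lemma is false (take $\rho_1=\rho_2=\tau=1$, $\epsilon_0=100$, $\epsilon_1=101$: the hypothesis holds but $\epsilon_1\not\le e$), so the implicit requirement $\epsilon_0\le\rho_2$ (equivalently, the $n=0$ case under the empty-sum convention) must be added. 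It is worth noting that this requirement is in fact met where the lemma is invoked in \eqref{eq3.19} and \eqref{eq3.22}: there $\epsilon_0=\lVert E^0\rVert_{\mathscr{B}}$ and $\rho_2=\frac{\sqrt3}{\nu}\lVert(I+\frac{23}{18}\tau^2\delta^{\alpha}_x\delta^{\beta}_y)E^0\rVert$, and since $\delta^{\alpha}_x\delta^{\beta}_y$ is positive semidefinite by \ref{lem3.3} one has $\lVert(I+\frac{23}{18}\tau^2\delta^{\alpha}_x\delta^{\beta}_y)E^0\rVert\ge\lVert E^0\rVert\ge\lVert E^0\rVert_{\mathscr{B}}$ by \ref{lem3.2}, so $\epsilon_0\le\rho_2$ holds there; still, the hypothesis should be stated explicitly in the lemma.
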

Assuming that $\widetilde{u}^n_{i,j}$ is the numerical solution for the numerical method \eqref{eq2.13}-\eqref{eq2.15} starting from another initial value. Denote $E^n=(E_{0,0}^n,\cdots,E_{M_1,0}^n,\cdots,E_{0,M_2}^n,\cdots,E_{M_1,M_2}^n )$, where $E^n_{i,j}=u^n_{i,j}-\widetilde{u}^n_{i,j}, ~(x_i,y_j)\in{\Omega}_h $, then we have following consequence.
\begin{thm}\label{thm3.1}
 For any positive real number~$\nu \in (0,1)$, if $0 < \tau \leq \tau_0=\frac{1-\nu}{9L}$, then the numerical scheme \eqref{eq2.13}-\eqref{eq2.15} is stable, i.e.
 $$
\lVert E^n \rVert \leq \frac{3}{\nu}e^{\frac{18}{\nu}LT}\lVert \big( I+\frac{23}{18}\tau^2\delta^{\alpha}_x\delta^{\beta}_y \big)E^0 \rVert,\qquad n\geq 1.
$$
 \end{thm}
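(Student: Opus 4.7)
The plan is a discrete energy argument tailored to the ADI/BDF2 structure. First, form the error equation by subtracting the scheme applied to $u^n$ and to $\widetilde u^n$:
\begin{equation*}
\bigl(\mathscr{B}^\alpha_x\mathscr{B}^\beta_y+\tau^2\sigma_n^2\delta^\alpha_x\delta^\beta_y\bigr)D_t^{(2)}E^n
=\mathscr{B}^\beta_y\delta^\alpha_xE^n+\mathscr{B}^\alpha_x\delta^\beta_yE^n+\mathscr{B}^\alpha_x\mathscr{B}^\beta_y G^n,
\end{equation*}
where $G^1_{i,j}=g(x_i,y_j,t_1,u^0_{i,j})-g(x_i,y_j,t_1,\widetilde u^0_{i,j})$ and $G^k_{i,j}=g(x_i,y_j,t_k,2u^{k-1}_{i,j}-u^{k-2}_{i,j})-g(x_i,y_j,t_k,2\widetilde u^{k-1}_{i,j}-\widetilde u^{k-2}_{i,j})$ for $k\ge 2$; by the Lipschitz condition \eqref{eq1.4}, $\|G^1\|\le L\|E^0\|$ and $\|G^k\|\le 2L\|E^{k-1}\|+L\|E^{k-2}\|$. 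The strategy is to pair this equation in the discrete $L^2$ inner product with $\tfrac{4\tau}{3}E^k$ and sum over $k=1,\ldots,n$.

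On the left-hand side, since $\mathscr{B}^\alpha_x\mathscr{B}^\beta_y=(Q_xQ_y)^2$ and $\delta^\alpha_x\delta^\beta_y=(\Lambda_x\Lambda_y)^2$ with commuting, self-adjoint, positive (semi)definite factors (\ref{lem3.3} and the discussion preceding \ref{lem3.2}), both time-derivative terms reduce to scalar BDF2 energies $\tfrac{4\tau}{3}\sum_k(D_t^{(2)}W^k,W^k)$ and $\tfrac{4\tau}{3}\sum_k\tau^2\sigma_k^2(D_t^{(2)}V^k,V^k)$ with $W^k=Q_xQ_yE^k$ and $V^k=\Lambda_x\Lambda_yE^k$. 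Applying \ref{lem3.1} node-by-node --- its third identity at $k=1$, its first inequality for $2\le k\le n$ --- telescopes these sums and produces $\|E^n\|_{\mathscr{B}}^2+\sigma_n^2\tau^2\|E^n\|_\delta^2$ at the top step, a harmless $-\tfrac13\|E^{n-1}\|_{\mathscr{B}}^2$ correction that will feed into the Bellman sum, and an initial-data cluster that, after combining the $\sigma_1^2=1$ versus $\sigma_k^2=4/9$ contributions with the $(v^1-v^0)^2$ remainders, collects into the single quantity $\|(I+\tfrac{23}{18}\tau^2\delta^\alpha_x\delta^\beta_y)E^0\|^2$. The two mixed diffusion terms on the right are nonpositive by \ref{lem3.4} and therefore simply drop.

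For the nonlinear residual, Cauchy--Schwarz together with \ref{lem3.2} gives $|(\mathscr{B}^\alpha_x\mathscr{B}^\beta_y G^k,E^k)|=|(Q_xQ_yG^k,Q_xQ_yE^k)|\le\|G^k\|\,\|E^k\|$. Combining the Lipschitz bounds on $G^k$ with $ab\le\tfrac12(a^2+b^2)$ converts $\tfrac{4\tau}{3}\sum_k|(\mathscr{B}^\alpha_x\mathscr{B}^\beta_y G^k,E^k)|$ into an expression of the form $C_1L\tau\sum_{k=0}^{n-1}\|E^k\|^2+C_2L\tau\|E^n\|^2$ with tractable universal $C_1,C_2$. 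Choosing the Young split so that $C_2L\tau\le\tfrac{1-\nu}{3}$ --- which is exactly what $\tau\le(1-\nu)/(9L)$ is engineered to deliver --- allows the $\|E^n\|^2$ piece to be absorbed using $\tfrac{1}{3}\|E^n\|^2\le\|E^n\|_{\mathscr{B}}^2$ from \ref{lem3.2}, yielding
\begin{equation*}
\tfrac{\nu}{3}\|E^n\|^2\le \bigl\|(I+\tfrac{23}{18}\tau^2\delta^\alpha_x\delta^\beta_y)E^0\bigr\|^2+\tfrac{6L}{\nu}\,\tau\sum_{k=0}^{n-1}\|E^k\|^2.
\end{equation*}
A final application of \ref{lem3.5} with $\rho_1=18L/\nu$ and $\rho_2=(3/\nu)\|(I+\tfrac{23}{18}\tau^2\delta^\alpha_x\delta^\beta_y)E^0\|^2$ then delivers the claimed estimate.

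The principal obstacle I anticipate lies in the second paragraph: matching the precise constants $3/\nu$, $18L/\nu$, and particularly $23/18$ requires meticulous accounting of the $n=1$ starting step (where $D_t^{(2)}$ collapses to a one-step forward difference and $\sigma_1=1$) against the $n\ge 2$ BDF2 steps (with $\sigma_k^2=4/9$), careful handling of the $\tfrac{2}{3}(v^1-v^0)^2$ remainders in \ref{lem3.1}, and tracking of the factor-of-three loss incurred whenever \ref{lem3.2} is used to exchange $\|\cdot\|_{\mathscr{B}}$ for $\|\cdot\|$. The remaining steps --- dropping diffusion terms via \ref{lem3.4}, Young absorption of the Lipschitz contribution, and Bellman closure --- are routine once the constants are in place.
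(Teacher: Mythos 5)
Your overall strategy---testing the error equation with $\tfrac{4\tau}{3}E^k$, reducing the two time-derivative terms to scalar BDF2 energies via the square roots $Q_xQ_y$ and $\Lambda_x\Lambda_y$, dropping the diffusion terms by \ref{lem3.4}, and bounding the nonlinear residual by Lipschitz continuity plus Young's inequality---is exactly the paper's. The gap is in how you close the estimate. After \ref{lem3.1} is applied, the right-hand side contains two terms that carry \emph{no} factor of $\tau$: the term $\tfrac13\lVert E^1\rVert^2_{\mathscr{B}}$ coming from the $-\tfrac13(v^1)^2$ in the second inequality of \ref{lem3.1}, and the term $\tfrac13\big(\lVert E^{n-1}\rVert^2_{\mathscr{B}}+\tfrac49\tau^2\lVert E^{n-1}\rVert_{\delta}^2\big)$ coming from the $-\tfrac13(v^{n-1})^2$ contributions. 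Neither can simply be ``fed into the Bellman sum'': \ref{lem3.5} requires the recursion in the form $\epsilon_n\le\rho_2+\rho_1\tau\sum_{k<n}\epsilon_k$, and a coefficient $\tfrac13$ unaccompanied by $\tau$ cannot be written as $\rho_1\tau$ with $\rho_1$ bounded as $\tau\to0$. The paper removes these two terms by two separate devices that your outline omits. First, it writes the energy inequality at $n=1$ (its (3.12)) and substitutes that back into the general inequality to eliminate $\tfrac13\lVert E^1\rVert^2_{\mathscr{B}}$; this bootstrap is in fact where the constant $\tfrac{23}{18}$ is born ($\tfrac{14}{27}+\tfrac13=\tfrac{23}{27}=\tfrac23\cdot\tfrac{23}{18}$), so it is not merely ``meticulous accounting'' of the remainders in \ref{lem3.1} as you suggest. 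Second, it chooses $n_0$ maximizing $\lVert E^l\rVert^2_{\mathscr{B}}+\tfrac49\tau^2\lVert E^l\rVert_{\delta}^2$ over $0\le l\le n$ so that the $E^{n-1}$ term can be moved to the left and absorbed at the cost of a factor $\tfrac32$; only after this does a recursion emerge to which \ref{lem3.5} applies.

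Both devices are reparable rather than fatal (the $\tfrac13\epsilon_{n-1}$ term could alternatively be iterated as a geometric series), but as written your displayed inequality $\tfrac{\nu}{3}\lVert E^n\rVert^2\le\lVert(I+\tfrac{23}{18}\tau^2\delta^{\alpha}_x\delta^{\beta}_y)E^0\rVert^2+\tfrac{6L}{\nu}\tau\sum_{k=0}^{n-1}\lVert E^k\rVert^2$ does not follow from the steps you describe, and the missing steps are precisely the only non-routine part of the argument. A smaller discrepancy: the initial-data cluster naturally collects into the inner product $\big((I+\tfrac{23}{18}\tau^2\delta^{\alpha}_x\delta^{\beta}_y)E^0,E^0\big)$, which the paper bounds by Cauchy--Schwarz as $\lVert(I+\cdots)E^0\rVert\,\lVert E^0\rVert$ and then cancels one factor against the square root of the left-hand side (its (3.15)--(3.17)); the paper's recursion is therefore linear in $\lVert E^k\rVert_{\mathscr{B}}$ rather than quadratic, which is how the stated constant $\tfrac{3}{\nu}e^{\frac{18}{\nu}LT}$ (linear in $\lVert(I+\cdots)E^0\rVert$) is actually obtained.
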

\begin{proof}
According to \eqref{eq2.13}-\eqref{eq2.15}, we have the following equations
\begin{equation*}\label{eq3.1}
\left \{
\begin{array}{ll}
\mathscr{B}^{\alpha}_x \mathscr{B}^{\beta}_yD_t^{(2)}E^{n}_{i,j}+\tau^2\sigma_n^2\delta^{\alpha}_x\delta^{\beta}_y D_t^{(2)}E^{n}_{i,j}=\mathscr{B}^{\beta}_y\delta^{\alpha}_xE^{n}_{i,j}+\mathscr{B}^{\alpha}_x\delta^{\beta}_yE^{n}_{i,j}+\mathscr{B}^{\alpha}_x \mathscr{B}^{\beta}_y g^n_{i,j}-\mathscr{B}^{\alpha}_x \mathscr{B}^{\beta}_y \widetilde{g}^n_{i,j},\\
\qquad\qquad\qquad\qquad\qquad\qquad\qquad\qquad\qquad\qquad\qquad(x_i,y_j)\in {\Omega}_h ,1\leq n \leq N,\\
E^n_{i,j}=0,\quad (x_i,y_j)\in \partial{\Omega}_h ,1\leq n \leq N,\\
E^0_{i,j}=\varphi_0(x_i,y_j), \quad (x_i,y_j)\in \overline{\Omega}_h .
\end{array}
\right.
\eqno{
\begin{array}{ll}
(3.1a)\\\\ (3.1b)\\(3.1c)
\end{array}
}
\end{equation*}
Multiplying by $\tau h_xh_yE^n_{i,j}$ in $(3.1a)$, then summing from 1 to $M_1-1$ on $i$, and summing from 1 to $M_2-1$ on $j$, we obtain
\begin{equation*}\label{eq3.2}
\begin{array}{ll}
\tau(\mathscr{B}^{\alpha}_x \mathscr{B}^{\beta}_yD_t^{(2)}E^{n},E^n)+\tau(\tau^2\sigma_n^2\delta^{\alpha}_x\delta^{\beta}_y D_t^{(2)}E^{n},E^n)=\tau(\mathscr{B}^{\beta}_y\delta^{\alpha}_xE^{n},E^n)+\tau(\mathscr{B}^{\alpha}_x\delta^{\beta}_yE^{n},E^n)\\
\qquad\qquad\qquad\qquad\qquad\qquad\qquad\qquad\qquad+\tau h_xh_y\sum\limits^{M_1-1}_{i=1}\sum\limits^{M_2-1}_{j=1}\mathscr{B}^{\alpha}_x \mathscr{B}^{\beta}_y \big( g^n_{i,j}-\widetilde{g}^n_{i,j} \big)E^n_{i,j}.
\end{array}\tag{3.2}
\end{equation*}
where
\begin{equation*}
\widetilde{g}^n_{i,j}= \left \{
\begin{array}{ll}
g\big(x_i,y_j,t_1,\widetilde{u}^0_{i,j}\big),\quad & n=1,\\
g\big(x_i,y_j,t_n,2\widetilde{u}^{n-1}_{i,j}-\widetilde{u}^{n-2}_{i,j}\big),\quad & n\geq 2.
\end{array}
\right.
\end{equation*}
From \ref{lem3.4}, we have
\begin{equation*}\label{eq3.3}
\begin{array}{ll}
\tau(\mathscr{B}^{\beta}_y\delta^{\alpha}_xE^{n},E^n)+\tau(\mathscr{B}^{\alpha}_x\delta^{\beta}_yE^{n},E^n) \leq 0.
\end{array}\tag{3.3}
\end{equation*}
Substituting \eqref{eq3.3} into \eqref{eq3.2}, it holds that
\begin{equation*}\label{eq3.4}
\begin{array}{ll}
\tau(\mathscr{B}^{\alpha}_x \mathscr{B}^{\beta}_yD_t^{(2)}E^{n},E^n)+\tau(\tau^2\sigma_n^2\delta^{\alpha}_x\delta^{\beta}_y D_t^{(2)}E^{n},E^n)
\leq \tau h_xh_y\sum\limits^{M_1-1}_{i=1}\sum\limits^{M_2-1}_{j=1}\mathscr{B}^{\alpha}_x \mathscr{B}^{\beta}_y \big( g^n_{i,j}-\widetilde{g}^n_{i,j} \big)E^n_{i,j}.
\end{array}\tag{3.4}
\end{equation*}
Multiplying by $\frac43$ in \eqref{eq3.4}, and summing from 1 to $k$ on $n$ and replacing k by n, we get
\begin{equation*}\label{eq3.5}
\begin{array}{ll}
\frac{4\tau}{3}\sum\limits^{n}_{k=1}(\mathscr{B}^{\alpha}_x \mathscr{B}^{\beta}_yD_t^{(2)}E^{k},E^k)+\frac{4\tau}{3}\sum\limits^{n}_{k=1}(\tau^2\sigma_k^2\delta^{\alpha}_x\delta^{\beta}_y D_t^{(2)}E^{k},E^k)\\
\qquad\qquad\qquad\qquad\qquad\qquad\leq \frac{4\tau h_xh_y}{3}\sum\limits^{n}_{k=1}\sum\limits^{M_1-1}_{i=1}\sum\limits^{M_2-1}_{j=1}\mathscr{B}^{\alpha}_x \mathscr{B}^{\beta}_y \big( g^k_{i,j}-\widetilde{g}^k_{i,j} \big)E^k_{i,j}.
\end{array}\tag{3.5}
\end{equation*}
According to \ref{lem3.1}, we have
\begin{equation*}\label{eq3.6}
\begin{array}{ll}
\frac{4\tau}{3}\sum\limits^{n}_{k=1}(\mathscr{B}^{\alpha}_x \mathscr{B}^{\beta}_yD_t^{(2)}E^{k},E^k)=\frac{4\tau}{3}\sum\limits^{n}_{k=1}(D_t^{(2)}Q_xQ_yE^{k},Q_xQ_yE^k)\\
~~~\quad\qquad\qquad\qquad\qquad\geq
\lVert E^n \rVert_\mathscr{B}^2-\frac13 \lVert E^{n-1} \rVert_\mathscr{B}^2-\frac13 \lVert E^1 \rVert_\mathscr{B}^2-\frac13 \lVert E^0 \rVert_\mathscr{B}^2,
\end{array}\tag{3.6}
\end{equation*}
and
\begin{equation*}\label{eq3.7}
\begin{array}{ll}
\frac{4\tau}{3}\sum\limits^{n}_{k=1} \sigma_k^2(\delta^{\alpha}_x\delta^{\beta}_y D_t^{(2)}E^{k},E^k)&=\frac{4\tau}{3}\sum\limits^{n}_{k=1} \sigma_k^2(D_t^{(2)}\Lambda_x\Lambda_y E^{k},\Lambda_x\Lambda_yE^k)\\
\qquad\qquad\qquad\qquad\qquad&=\frac49\cdot\frac{4\tau}{3}\sum\limits^{n}_{k=2} (D_t^{(2)}\Lambda_x\Lambda_y E^{k},\Lambda_x\Lambda_yE^k)\\
\qquad\qquad\qquad\qquad\qquad&\quad+\frac{4\tau}{3}(D_t^{(2)}\Lambda_x\Lambda_y E^{1},\Lambda_x\Lambda_yE^1)\\
\qquad\qquad\qquad\qquad\qquad&\geq
\frac49\lVert E^n \rVert_{\delta}^2-\frac{4}{27}\lVert E^{n-1} \rVert_{\delta}^2-\frac49\lVert E^1 \rVert_{\delta}^2+\frac{4}{27}\lVert E^0 \rVert_{\delta}^2-\frac{8}{27}\lVert E^1-E^0 \rVert_{\delta}^2\\
\qquad\qquad\qquad\qquad\qquad&\quad+\frac23\lVert E^1 \rVert_{\delta}^2-\frac23\lVert E^0 \rVert_{\delta}^2+\frac23\lVert E^1-E^0 \rVert_{\delta}^2\\
\qquad\qquad\qquad\qquad\qquad&\geq
\frac49\lVert E^n \rVert_{\delta}^2-\frac{4}{27}\lVert E^{n-1} \rVert_{\delta}^2-\frac{14}{27}\lVert E^0 \rVert_{\delta}^2.
\end{array}\tag{3.7}
\end{equation*}
Substituting \eqref{eq3.6}-\eqref{eq3.7} into \eqref{eq3.5}, we obtain
\begin{equation*}\label{eq3.8}
\begin{array}{ll}
\lVert E^n \rVert^2_\mathscr{B}+\frac49\tau^2\lVert E^n \rVert_{\delta}^2
\leq
\frac13\big( \lVert E^{n-1} \rVert^2_\mathscr{B}+\frac49\tau^2\lVert E^{n-1} \rVert_{\delta}^2 \big)+\frac13\lVert E^1 \rVert^2_\mathscr{B}+\frac13\big( \lVert E^0 \rVert^2_\mathscr{B}+\frac{14}{9}\tau^2\lVert E^0 \rVert_{\delta}^2 \big)\\
\quad\qquad\qquad\qquad\qquad+\frac{4\tau h_xh_y}{3} \sum\limits^{n}_{k=1}\sum\limits^{M_1-1}_{i=1}\sum\limits^{M_2-1}_{j=1}\big|\mathscr{B}^{\alpha}_x \mathscr{B}^{\beta}_y \big( g^k_{i,j}-\widetilde{g}^k_{i,j} \big)E^k_{i,j}\big|.
\end{array}\tag{3.8}
\end{equation*}
When $k=1$, from \ref{lem3.2}, \eqref{eq1.4} and Cauchy-Schwarz inequality, we have
\begin{equation*}\label{eq3.9}
\begin{array}{ll}
h_xh_y\sum\limits^{M_1-1}_{i=1}\sum\limits^{M_2-1}_{j=1}\big|\mathscr{B}^{\alpha}_x \mathscr{B}^{\beta}_y \big( g^1_{i,j}-\widetilde{g}^1_{i,j} \big)E^1_{i,j}\big|\leq(\mathscr{B}^{\alpha}_x \mathscr{B}^{\beta}_y\big| g^1-\widetilde{g}^1 \big | ,|E^1|)\\
\quad\qquad\qquad\qquad\qquad\qquad\qquad\leq
L(\mathscr{B}^{\alpha}_x \mathscr{B}^{\beta}_y\big| E^{0} \big | ,\big | E^1 \big | )\\
\quad\qquad\qquad\qquad\qquad\qquad\qquad=
L(Q_xQ_y\big| E^{0} \big | ,Q_xQ_y\big | E^1 \big | )\\
\quad\qquad\qquad\qquad\qquad\qquad\qquad\leq
\frac{L}{2}\big\lVert |E^0| \big\rVert^2_{\mathscr{B}}+\frac{L}{2}\big\lVert |E^1| \big\rVert^2_{\mathscr{B}}\\
\quad\qquad\qquad\qquad\qquad\qquad\qquad\leq
\frac{L}{2}\big\lVert |E^0| \big\rVert^2+\frac{L}{2}\big\lVert |E^1| \big\rVert^2\\
\quad\qquad\qquad\qquad\qquad\qquad\qquad=
\frac{L}{2}\big\lVert E^0 \big\rVert^2+\frac{L}{2}\big\lVert E^1 \big\rVert^2.
\end{array}\tag{3.9}
\end{equation*}
When $k\geq 2$, according to \ref{lem3.2}, \eqref{eq1.4} and Cauchy-Schwarz inequality, we get
\begin{equation*}\label{eq3.10}
\begin{array}{ll}
h_xh_y\sum\limits^{M_1-1}_{i=1}\sum\limits^{M_2-1}_{j=1}\big|\mathscr{B}^{\alpha}_x \mathscr{B}^{\beta}_y \big( g^k_{i,j}-\widetilde{g}^k_{i,j} \big)E^k_{i,j}\big|\leq(\mathscr{B}^{\alpha}_x \mathscr{B}^{\beta}_y\big| g^k-\widetilde{g}^k \big | ,|E^k|)\\
\qquad\qquad\qquad\qquad\qquad\leq
L(\mathscr{B}^{\alpha}_x \mathscr{B}^{\beta}_y\big| 2E^{k-1}-E^{k-2} \big | ,|E^k|)\\
\qquad\qquad\qquad\qquad\qquad\leq
2L(\mathscr{B}^{\alpha}_x \mathscr{B}^{\beta}_y\big| E^{k-1}\big | ,\big | E^k \big |)+L(\mathscr{B}^{\alpha}_x \mathscr{B}^{\beta}_y\big| E^{k-2}\big | ,\big | E^k \big |)\\
\qquad\qquad\qquad\qquad\qquad\leq
2L\big\lVert |E^{k-1}| \big\rVert_{\mathscr{B}}\big\lVert |E^k| \big\rVert_{\mathscr{B}}+L\big\lVert |E^{k-2}| \big\rVert_{\mathscr{B}}\big\lVert |E^k| \big\rVert_{\mathscr{B}}\\
\qquad\qquad\qquad\qquad\qquad\leq
L\big\lVert |E^{k-1}| \big\rVert^2_{\mathscr{B}}+L\big\lVert |E^k| \big\rVert^2_{\mathscr{B}}+\frac{L}{2}\big\lVert |E^{k-2}| \big\rVert_{\mathscr{B}}^2+\frac{L}{2}\big\lVert |E^k| \big\rVert_{\mathscr{B}}^2\\
\qquad\qquad\qquad\qquad\qquad=
\frac{3L}{2}\big\lVert |E^k| \big\rVert^2_{\mathscr{B}}+L\big\lVert |E^{k-1}| \big\rVert^2_{\mathscr{B}}+\frac{L}{2}\big\lVert |E^{k-2}| \big\rVert^2_{\mathscr{B}}\\
\qquad\qquad\qquad\qquad\qquad\leq
\frac{3L}{2}\big\lVert |E^k| \big\rVert^2+L\big\lVert |E^{k-1}| \big\rVert^2+\frac{L}{2}\big\lVert |E^{k-2}| \big\rVert^2\\
\qquad\qquad\qquad\qquad\qquad=
\frac{3L}{2}\big\lVert E^k \big\rVert^2+L\big\lVert E^{k-1} \big\rVert^2+\frac{L}{2}\big\lVert E^{k-2} \big\rVert^2.
\end{array}\tag{3.10}
\end{equation*}
Substituting \eqref{eq3.9}-\eqref{eq3.10} into \eqref{eq3.8}, from \ref{lem3.2}, we have
\begin{equation*}\label{eq3.11}
\begin{array}{ll}
\lVert E^n \rVert^2_{\mathscr{B}} \leq \lVert E^n \rVert^2_{\mathscr{B}}+\frac49\tau^2\lVert E^n \rVert_{\delta}^2\\
\quad\qquad\leq
\frac13\big( \lVert E^{n-1} \rVert^2_{\mathscr{B}}+\frac49\tau^2\lVert E^{n-1} \rVert_{\delta}^2 \big)+\frac13\lVert E^1 \rVert^2_{\mathscr{B}}+\frac13\big( \lVert E^0 \rVert^2_{\mathscr{B}}+\frac{14}{9}\tau^2\lVert E^0 \rVert_{\delta}^2 \big)\\
\qquad\qquad+\frac{4\tau L}{3}\sum\limits^{n}_{k=2}\big(  \frac32\lVert E^k \rVert^2+\lVert E^{k-1} \rVert^2+\frac12\lVert E^{k-2} \rVert^2  \big)\\
\qquad\qquad+
\frac{2\tau L}{3}\lVert E^1 \rVert^2+\frac{2\tau L}{3}\lVert E^0 \rVert^2\\
\quad\qquad\leq
\frac13\big( \lVert E^{n-1} \rVert^2_{\mathscr{B}}+\frac49\tau^2\lVert E^{n-1} \rVert_{\delta}^2 \big)+\frac13\lVert E^1 \rVert^2_{\mathscr{B}}+\frac13\big( \lVert E^0 \rVert^2_{\mathscr{B}}+\frac{14}{9}\tau^2\lVert E^0 \rVert_{\delta}^2 \big)\\
\qquad\qquad+4\tau L\sum\limits^{n}_{k=2}\big(  \frac32\lVert E^k \rVert^2_{\mathscr{B}}+\lVert E^{k-1} \rVert^2_{\mathscr{B}}+\frac12\lVert E^{k-2} \rVert^2_{\mathscr{B}}  \big)\\
\qquad\qquad+
2\tau L\lVert E^1 \rVert^2_{\mathscr{B}}+2\tau L\lVert E^0 \rVert^2_{\mathscr{B}}.
\end{array}\tag{3.11}
\end{equation*}
Taking $n=1$ in \eqref{eq3.11}, we find
\begin{equation*}\label{eq3.12}
\begin{array}{ll}
\lVert E^1 \rVert^2_{\mathscr{B}} \leq \lVert E^0 \rVert^2_{\mathscr{B}}+\tau^2\lVert E^0 \rVert^2_{\delta}+3\tau L\lVert E^1 \rVert^2_{\mathscr{B}}+3\tau L\lVert E^0 \rVert^2_{\mathscr{B}}.
\end{array}\tag{3.12}
\end{equation*}
Substituting \eqref{eq3.12} into \eqref{eq3.11}, we obtain
\begin{equation*}\label{eq3.13}
\begin{array}{ll}
\lVert E^n \rVert^2_{\mathscr{B}}+\frac49\tau^2\lVert E^n \rVert_{\delta}^2
\leq
\frac13\big( \lVert E^{n-1} \rVert^2_{\mathscr{B}}+\frac49\tau^2\lVert E^{n-1} \rVert_{\delta}^2 \big)+\frac23\big( \lVert E^0 \rVert^2_{\mathscr{B}}+\frac{23}{18}\tau^2\lVert E^0 \rVert_{\delta}^2 \big)\\
\quad\qquad\qquad\quad\qquad\qquad+3\tau L\lVert E^1 \rVert^2_{\mathscr{B}}+3\tau L\lVert E^0 \rVert^2_{\mathscr{B}}\\
\quad\qquad\qquad\quad\qquad\qquad+4\tau L\sum\limits^{n}_{k=2}\big(  \frac32\lVert E^k \rVert^2_{\mathscr{B}}+\lVert E^{k-1} \rVert^2_{\mathscr{B}}+\frac12\lVert E^{k-2} \rVert^2_{\mathscr{B}}  \big).
\end{array}\tag{3.13}
\end{equation*}
Taking $0 \leq n_0 \leq n$ such that
\begin{equation*}\label{eq3.14}
\begin{array}{ll}
\lVert E^{n_0} \rVert^2_{\mathscr{B}}+\frac49\tau^2\lVert E^{n_0} \rVert_{\delta}^2= \underset{0 \leq l \leq n}{\max}\big(
\lVert E^{l} \rVert^2_{\mathscr{B}}+\frac49\tau^2\lVert E^{l} \rVert_{\delta}^2
 \big) \geq \lVert E^{l} \rVert^2_{\mathscr{B}},\quad 0 \leq l \leq n.
 \end{array}\tag{3.14}
\end{equation*}
Therefore
\begin{equation*}
\begin{array}{ll}
\frac13\big( \lVert E^{n-1} \rVert^2_{\mathscr{B}}+\frac49\tau^2\lVert E^{n-1} \rVert_{\delta}^2 \big) \leq
\frac13\big( \lVert E^{n_0} \rVert^2_{\mathscr{B}}+\frac49\tau^2\lVert E^{n_0} \rVert_{\delta}^2 \big).
\end{array}
\end{equation*}
According to \ref{lem3.2} and Cauchy-Schwarz inequality, it follows from \eqref{eq3.13}-\eqref{eq3.14} that
\begin{equation*}\label{eq3.15}
\begin{array}{ll}
\lVert E^{n_0} \rVert^2_{\mathscr{B}}+\frac49\tau^2\lVert E^{n_0} \rVert_{\delta}^2
\leq
\big( \lVert E^0 \rVert^2_{\mathscr{B}}+\frac{23}{18}\tau^2\lVert E^0 \rVert_{\delta}^2 \big)+\frac{9}{2}\tau L\lVert E^1 \rVert^2_{\mathscr{B}}+\frac{9}{2}\tau L\lVert E^0 \rVert^2_{\mathscr{B}} \\
\quad\qquad\qquad\quad\qquad\qquad+6\tau L\sum\limits^{n_0}_{k=2}\big(  \frac32\lVert E^k \rVert^2_{\mathscr{B}}+\lVert E^{k-1} \rVert^2_{\mathscr{B}}+\frac12\lVert E^{k-2} \rVert^2_{\mathscr{B}}  \big)\\
~\quad\qquad\quad\qquad\qquad\leq
\big( \lVert E^0 \rVert^2+\frac{23}{18}\tau^2\lVert E^0 \rVert_{\delta}^2 \big)+\frac{9}{2}\tau L\lVert E^1 \rVert^2_{\mathscr{B}}+\frac{9}{2}\tau L\lVert E^0 \rVert^2_{\mathscr{B}} \\
~\quad\quad\qquad\quad\qquad\qquad+6\tau L\sum\limits^{n}_{k=2}\big(  \frac32\lVert E^k \rVert^2_{\mathscr{B}}+\lVert E^{k-1} \rVert^2_{\mathscr{B}}+\frac12\lVert E^{k-2} \rVert^2_{\mathscr{B}}  \big)\\
~\quad\qquad\quad\qquad\qquad=\Big(  \big( I+\frac{23}{18}\tau^2\delta^{\alpha}_x\delta^{\beta}_y \big)E^0,E^0  \Big)+\frac{9}{2}\tau L\lVert E^1 \rVert^2_{\mathscr{B}}+\frac{9}{2}\tau L\lVert E^0 \rVert^2_{\mathscr{B}} \\
~\quad\quad\qquad\quad\qquad\qquad+6\tau L\sum\limits^{n}_{k=2}\big(  \frac32\lVert E^k \rVert^2_{\mathscr{B}}+\lVert E^{k-1} \rVert^2_{\mathscr{B}}+\frac12\lVert E^{k-2} \rVert^2_{\mathscr{B}}  \big)\\
~\quad\qquad\quad\qquad\qquad\leq
\sqrt{3}\lVert \big( I+\frac{23}{18}\tau^2\delta^{\alpha}_x\delta^{\beta}_y \big)E^0 \rVert\lVert E^0 \rVert_{\mathscr{B}}  +\frac{9}{2}\tau L\lVert E^1 \rVert^2_{\mathscr{B}}+\frac{9}{2}\tau L\lVert E^0 \rVert^2_{\mathscr{B}} \\
~\quad\qquad\quad\quad\qquad\qquad+6\tau L\sum\limits^{n}_{k=2}\big(  \frac32\lVert E^k \rVert^2_{\mathscr{B}}+\lVert E^{k-1} \rVert^2_{\mathscr{B}}+\frac12\lVert E^{k-2} \rVert^2_{\mathscr{B}}  \big).\\
\end{array}\tag{3.15}
\end{equation*}
We obtain from \eqref{eq3.14} and \ref{lem3.2} that
\begin{equation*}\label{eq3.16}
\begin{array}{ll}
\lVert E^{n_0} \rVert^2_{\mathscr{B}}+\frac49\tau^2\lVert E^{n_0} \rVert_{\delta}^2
\leq
\sqrt{3}\lVert \big( I+\frac{23}{18}\tau^2\delta^{\alpha}_x\delta^{\beta}_y \big)E^0 \rVert\lVert E^0 \rVert_{\mathscr{B}}   +\frac{9}{2}\tau L\lVert E^1 \rVert^2_{\mathscr{B}}+\frac{9}{2}\tau L\lVert E^0 \rVert^2_{\mathscr{B}} \\
\quad\qquad\qquad\quad\qquad\qquad+6\tau L\sum\limits^{n}_{k=2}\big(  \frac32\lVert E^k \rVert^2_{\mathscr{B}}+\lVert E^{k-1} \rVert^2_{\mathscr{B}}+\frac12\lVert E^{k-2} \rVert^2_{\mathscr{B}}  \big)\\
~\quad\qquad\quad\qquad\qquad\leq
\Big( \sqrt{3}\lVert \big( I+\frac{23}{18}\tau^2\delta^{\alpha}_x\delta^{\beta}_y \big)E^0 \rVert   +\frac{9}{2}\tau L\lVert E^1 \rVert_{\mathscr{B}}+\frac{9}{2}\tau L\lVert E^0 \rVert_{\mathscr{B}} \\
\quad\qquad\qquad\quad\qquad\qquad+6\tau L\sum\limits^{n}_{k=2}\big(  \frac32\lVert E^k \rVert_{\mathscr{B}}+\lVert E^{k-1} \rVert_{\mathscr{B}}+\frac12\lVert E^{k-2} \rVert_{\mathscr{B}}  \big)\Big) \sqrt{\lVert E^{n_0} \rVert_{\mathscr{B}}^2+\frac49\tau^2\lVert E^{n_0} \rVert_{\delta}^2}.\\
\end{array}\tag{3.16}
\end{equation*}
Thus, it follows from \eqref{eq3.14} and \eqref{eq3.16} that
\begin{equation*}\label{eq3.17}
\begin{array}{ll}
\lVert E^{n} \rVert_{\mathscr{B}} \leq \sqrt{\lVert E^{n_0} \rVert^2_{\mathscr{B}}+\frac49\tau^2\lVert E^{n_0} \rVert_{\delta}^2}\\
\quad\qquad\leq
\sqrt{3}\lVert \big( I+\frac{23}{18}\tau^2\delta^{\alpha}_x\delta^{\beta}_y \big)E^0 \rVert   +\frac{9}{2}\tau L\lVert E^1 \rVert_{\mathscr{B}}+\frac{9}{2}\tau L\lVert E^0 \rVert_{\mathscr{B}}\\
\quad\qquad\qquad+6\tau L\sum\limits^{n}_{k=2}\big(  \frac32\lVert E^k \rVert_{\mathscr{B}}+\lVert E^{k-1} \rVert_{\mathscr{B}}+\frac12\lVert E^{k-2} \rVert_{\mathscr{B}}  \big) \\
\quad\qquad=
 \sqrt{3}\lVert \big( I+\frac{23}{18}\tau^2\delta^{\alpha}_x\delta^{\beta}_y \big)E^0 \rVert +9\tau L\sum\limits^{n-1}_{k=0}\lVert E^k \rVert_{\mathscr{B}}+6\tau L\sum\limits^{n-1}_{k=0}\lVert E^k \rVert_{\mathscr{B}}+3\tau L\sum\limits^{n-1}_{k=0}\lVert E^k \rVert_{\mathscr{B}}\\
\quad\qquad\qquad+
 9\tau L\lVert E^n \rVert_{\mathscr{B}}-\frac{9}{2}\tau L\lVert E^1 \rVert_{\mathscr{B}}-\frac{21}{2}\tau L\lVert E^0 \rVert_{\mathscr{B}}-3\tau L\lVert E^{n-1} \rVert_{\mathscr{B}}\\
\quad\qquad\leq
\sqrt{3}\lVert \big( I+\frac{23}{18}\tau^2\delta^{\alpha}_x\delta^{\beta}_y \big)E^0 \rVert+18 \tau L\sum\limits^{n-1}_{k=0}\lVert E^k \rVert_{\mathscr{B}}+9\tau L\lVert E^n \rVert_{\mathscr{B}}.
\end{array}\tag{3.17}
\end{equation*}
we can obtain the recursion from \eqref{eq3.17} that
\begin{equation*}\label{eq3.18}
\begin{array}{ll}
(1-9\tau L)\lVert E^n \rVert_{\mathscr{B}} \leq \sqrt{3}\lVert \big( I+\frac{23}{18}\tau^2\delta^{\alpha}_x\delta^{\beta}_y \big)E^0 \rVert+18 \tau L\sum\limits^{n-1}_{k=0}\lVert E^k \rVert_{\mathscr{B}}.
\end{array}\tag{3.18}
\end{equation*}
For any $\nu\in(0,1)$, and $0<\tau\leq \tau_0=\frac{1-\nu}{9L}$, according to \ref{lem3.5}, it follows from \eqref{eq3.18} that
\begin{equation*}\label{eq3.19}
\begin{array}{ll}
\lVert E^n \rVert_{\mathscr{B}} \leq \frac{\sqrt{3}}{\nu}\lVert \big( I+\frac{23}{18}\tau^2\delta^{\alpha}_x\delta^{\beta}_y \big)E^0 \rVert+\frac{18}{\nu} \tau L\sum\limits^{n-1}_{k=0}\lVert E^k \rVert_{\mathscr{B}}\\
\quad\qquad\leq \frac{\sqrt{3}}{\nu}\lVert \big( I+\frac{23}{18}\tau^2\delta^{\alpha}_x\delta^{\beta}_y \big)E^0 \rVert e^{\frac{18}{\nu} \tau Ln}\\
\quad\qquad\leq \frac{\sqrt{3}}{\nu}\lVert \big( I+\frac{23}{18}\tau^2\delta^{\alpha}_x\delta^{\beta}_y \big)E^0 \rVert e^{\frac{18}{\nu}LT}.
\end{array}\tag{3.19}
\end{equation*}
Recalling \ref{lem3.2}, we have
$$
\lVert E^n \rVert \leq \frac{3}{\nu}e^{\frac{18}{\nu}LT}\lVert \big( I+\frac{23}{18}\tau^2\delta^{\alpha}_x\delta^{\beta}_y \big)E^0 \rVert,\qquad n\geq 1.
$$
Therefore, the numerical scheme is stable. The proof is completed.
\end{proof}

\begin{thm}\label{thm3.2}
For any positive real number $v \in (0,1)$, if $0 <\tau \leq \tau_0=\frac{1-\nu}{9L}$, then the numerical scheme \eqref{eq2.13}-\eqref{eq2.15} is convergent, and it holds that $$\underset{1 \leq n \leq N}{\max}\lVert \eta^n \rVert=O(\tau^2+h_x^4+h_y^4).$$
\end{thm}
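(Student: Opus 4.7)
The plan is to mirror the stability argument of \ref{thm3.1}, applied now to the error $\eta^n_{i,j}=u(x_i,y_j,t_n)-u^n_{i,j}$. First I would subtract the numerical scheme \eqref{eq2.13} from the consistency identity \eqref{eq2.11}. Because the scheme uses the exact initial data \eqref{eq2.15}, the starting error vanishes: $\eta^0=0$. The resulting error equation is structurally identical to $(3.1a)$ but carries an additional inhomogeneous residual $\mathscr{R}^n_{i,j}$ obeying the bound \eqref{eq2.12}. The nonlinear mismatch between $g$ evaluated at the exact extrapolant and at the numerical one is controlled by the Lipschitz assumption \eqref{eq1.4}: for $n\geq 2$ it is bounded by $L|2\eta^{n-1}-\eta^{n-2}|$, while for $n=1$ it collapses to $L|\eta^0|=0$.

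Next I would reproduce the energy computation $(3.2)$--$(3.13)$ verbatim, with $E^n$ replaced by $\eta^n$. All $E^0$ terms drop out because $\eta^0=0$, and an additional residual contribution of the form $\tfrac{4\tau}{3}\sum_{k=1}^{n}(\mathscr{R}^k,\eta^k)$ enters the right-hand side. Estimating this by Cauchy--Schwarz and \ref{lem3.2} yields a contribution of the form $C\tau\sum_{k=1}^{n}\|\mathscr{R}^k\|\cdot\|\eta^k\|_{\mathscr{B}}$. Selecting $n_0$ exactly as in $(3.14)$--$(3.16)$ and absorbing one factor of $\|\eta^{n_0}\|_{\mathscr{B}}$ into the left-hand side produces a linear recursion for $\|\eta^n\|_{\mathscr{B}}$ of the same shape as $(3.18)$, but with the term $\sqrt{3}\|(I+\tfrac{23}{18}\tau^2\delta^\alpha_x\delta^\beta_y)E^0\|$ replaced by a residual functional of the $\mathscr{R}^k$. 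The step-size restriction $\tau\leq(1-\nu)/(9L)$ and the discrete Bellman inequality \ref{lem3.5} then deliver
\[
\|\eta^n\|_{\mathscr{B}}\leq C(\nu,L,T)\,\Big(\tau\|\mathscr{R}^1\|+\tau\sum_{k=2}^{n}\|\mathscr{R}^k\|\Big),
\]
and \ref{lem3.2} converts this into the required bound on $\|\eta^n\|$.

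The main obstacle I anticipate is the BDF2 startup step: according to \eqref{eq2.12} we only have $\|\mathscr{R}^1\|=O(\tau+h_x^4+h_y^4)$, one order in $\tau$ lower than the subsequent residuals. The remedy is standard but has to be executed carefully. The prefactor $\tau$ in front of $\|\mathscr{R}^1\|$ makes the single-step contribution $\tau\|\mathscr{R}^1\|=O(\tau^2+\tau h_x^4+\tau h_y^4)$, which is majorised by $O(\tau^2+h_x^4+h_y^4)$ uniformly in $\tau\in(0,\tau_0]$. The remaining $n-1$ steps contribute $\tau\sum_{k=2}^{n}\|\mathscr{R}^k\|\leq Tc_2(\tau^2+h_x^4+h_y^4)$. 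Combining the two, together with the exponential factor $e^{(18/\nu)LT}$ produced by the Bellman step, gives the global bound $\max_{1\leq n\leq N}\|\eta^n\|=O(\tau^2+h_x^4+h_y^4)$ claimed in the theorem. The secondary bookkeeping burden is that the two discrete norms $\|\cdot\|_{\mathscr{B}}$ and $\|\cdot\|_{\delta}$ both appear in the energy identity; here \ref{lem3.2} and the non-negativity asserted in \ref{lem3.3} let us discard the $\|\cdot\|_{\delta}$ side whenever it is inconvenient, keeping the argument parallel to the proof of \ref{thm3.1}.
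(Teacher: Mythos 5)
Your proposal is correct and follows essentially the same route as the paper: the paper likewise reuses the energy estimate (3.13) with $E^n$ replaced by $\eta^n$, appends the residual terms, bounds them by Cauchy--Schwarz and \ref{lem3.2} to reach a recursion of the form (3.21), and closes with the discrete Bellman inequality, absorbing the first-order startup residual exactly as you describe via the prefactor $\tau$ in $\tau\lVert\mathscr{R}^1\rVert$ (the paper assumes $\tau\leq 1$ for this). No gaps.
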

\begin{proof}
Denote $\eta^n_{i,j}=u(x_i,y_j,t_n)-u^n_{i,j},(x_i,y_j)\in{\Omega}_h, 0\leq n\leq N$, and $\eta^n=(\eta_{0,0}^n,\cdots,\eta_{M_1,0}^n,\cdots,\eta_{0,M_2}^n,\cdots,\eta_{M_1,M_2}^n )$. Similarly with the inference procedure of \ref{thm3.1}, it follows from \eqref{eq3.13} that
\begin{equation*}\label{eq3.20}
\begin{array}{ll}
\lVert \eta^n \rVert^2_{\mathscr{B}}+\frac49\tau^2\lVert \eta^n \rVert_{\delta}^2
\leq
\frac13\big( \lVert \eta^{n-1} \rVert^2_{\mathscr{B}}+\frac49\tau^2\lVert \eta^{n-1} \rVert_{\delta}^2 \big)+\frac23\big( \lVert \eta^0 \rVert^2_{\mathscr{B}}+\frac{23}{18}\tau^2\lVert \eta^0 \rVert_{\delta}^2 \big)\\
\quad\qquad\qquad\quad\qquad\qquad+3\tau L\lVert \eta^1 \rVert^2_{\mathscr{B}}+3\tau L\lVert \eta^0 \rVert^2_{\mathscr{B}}\\
\quad\qquad\qquad\quad\qquad\qquad+4\tau L\sum\limits^{n}_{k=2}\big(  \frac32\lVert \eta^k \rVert^2_{\mathscr{B}}+\lVert \eta^{k-1} \rVert^2_{\mathscr{B}}+\frac12\lVert \eta^{k-2} \rVert^2_{\mathscr{B}}  \big)\\
\quad\qquad\qquad\quad\qquad\qquad+\frac{4\tau h_xh_y}{3}\sum\limits^{n}_{k=2}\sum\limits^{M_1-1}_{i=1}\sum\limits^{M_2-1}_{j=1}\mathscr{R}_{i,j}^k \eta^k_{i,j}+2\tau h_xh_y\sum\limits^{M_1-1}_{i=1}\sum\limits^{M_2-1}_{j=1}\mathscr{R}_{i,j}^1 \eta^1_{i,j}\\
\quad\qquad\quad\qquad\qquad\leq
\frac13\big( \lVert \eta^{n-1} \rVert^2_{\mathscr{B}}+\frac49\tau^2\lVert \eta^{n-1} \rVert_{\delta}^2 \big)+\frac23\big( \lVert \eta^0 \rVert^2_{\mathscr{B}}+\frac{23}{18}\tau^2\lVert \eta^0 \rVert_{\delta}^2 \big)\\
\quad\qquad\qquad\quad\qquad\qquad+3\tau L\lVert \eta^1 \rVert^2_{\mathscr{B}}+3\tau L\lVert \eta^0 \rVert^2_{\mathscr{B}}\\
\quad\qquad\qquad\quad\qquad\qquad+4\tau L\sum\limits^{n}_{k=2}\big(  \frac32\lVert \eta^k \rVert^2_{\mathscr{B}}+\lVert \eta^{k-1} \rVert^2_{\mathscr{B}}+\frac12\lVert \eta^{k-2} \rVert^2_{\mathscr{B}}  \big)\\
\quad\qquad\qquad\quad\qquad\qquad+\frac{4\sqrt{3}\tau}{3}\sum\limits^{n}_{k=2}\lVert \mathscr{R}^k \rVert \lVert \eta^k \rVert_{\mathscr{B}}
+2\sqrt{3}\tau\lVert \mathscr{R}^1 \rVert \lVert \eta^1 \rVert_{\mathscr{B}}.\\
\end{array}\tag{3.20}
\end{equation*}
Similarly with \eqref{eq3.14}-\eqref{eq3.17}, we obtain
\begin{equation*}\label{eq3.21}
\begin{array}{ll}
(1-9\tau L)\lVert \eta^n \rVert_{\mathscr{B}} \leq 2\sqrt{3}\tau\sum\limits^{n}_{k=2}\lVert \mathscr{R}^k \rVert
+3\sqrt{3}\tau\lVert \mathscr{R}^1 \rVert +18 \tau L\sum\limits^{n-1}_{k=0}\lVert \eta^k \rVert_{\mathscr{B}}.
\end{array}\tag{3.21}
\end{equation*}
Assume that $\tau\leq 1$, it follows from \eqref{eq2.12} and the definition of the norm that
\begin{equation*}
\begin{array}{ll}
\tau\lVert \mathscr{R}^1 \rVert\leq c_1\sqrt{(b-a)(d-c)}(\tau^2+\tau h_x^4+\tau h_y^4)\leq c_1\sqrt{(b-a)(d-c)}(\tau^2+ h_x^4+ h_y^4)\\
\quad\qquad=c_1^\ast(\tau^2+ h_x^4+ h_y^4),
\\
\tau\sum\limits^{n-1}_{k=2}\lVert \mathscr{R}^k \rVert\leq \tau(n-2)c_2\sqrt{(b-a)(d-c)}(\tau^2+h_x^4+h_y^4)\leq c_2 T\sqrt{(b-a)(d-c)}(\tau^2+h_x^4+h_y^4)\\
\quad\quad\quad\qquad=c_2^\ast(\tau^2+ h_x^4+ h_y^4),
\end{array}
\end{equation*}
where $c_1^\ast=c_1\sqrt{(b-a)(d-c)}$, $c_2^\ast=c_2 T\sqrt{(b-a)(d-c)}.$\\
For any positive real number $\nu\in(0,1)$, let $\tau_0=\frac{1-\nu}{9L}$, if $0<\tau\leq \tau_0$, it follows from \ref{lem3.5} and \eqref{eq3.21} that
\begin{equation*}\label{eq3.22}
\begin{array}{ll}
\frac{\sqrt{3}}{3}\lVert \eta^n \rVert \leq \lVert \eta^n \rVert_{\mathscr{B}} \leq \frac{2\sqrt{3}\tau}{\nu}\sum\limits^{n}_{k=2}\lVert \mathscr{R}^k \rVert
+\frac{3\sqrt{3}\tau}{\nu}\lVert \mathscr{R}^1 \rVert +\frac{18 \tau L}{\nu}\sum\limits^{n-1}_{k=0}\lVert \eta^k \rVert_{\mathscr{B}}\\
\quad\qquad\leq
\big( \frac{2\sqrt{3}\tau}{\nu}\sum\limits^{n}_{k=2}\lVert \mathscr{R}^k \rVert
+\frac{3\sqrt{3}\tau}{\nu}\lVert \mathscr{R}^1 \rVert \big)e^{\frac{18 n\tau L}{\nu}}\\
\quad\qquad\leq
c_3(\tau^2+h_x^4+h_y^4),
\end{array}\tag{3.22}
\end{equation*}
where $c_3=\frac{\sqrt{3}(2c_1^\ast+3c_2^\ast)}{\nu}e^{\frac{18LT}{\nu}}.$\\
Thus
$$
\underset{1 \leq n \leq N}{\max}\lVert \eta^n \rVert=O(\tau^2+h_x^4+h_y^4).
$$
The proof is completed.
\end{proof}
\section{Numerical experiments}\label{sec4}
Let $\lVert \eta(h,\tau) \rVert=\sqrt{h^2 \sum\limits^{M_1-1}_{i=1} \sum\limits^{M_2-1}_{j=1}\left|  u(x_i,y_j,t_N)-u^N_{i,j}  \right|^2}$ and $\lVert \eta(h,\tau) \rVert_{\infty}=\underset{1\leq j\leq M_2-1}{\underset{1\leq i\leq M_1-1}{\max}}\big| u(x_i,y_j,t_N)-u^N_{i,j} \big|$ denote as $L_2$ norm and maximum norm errors with $h=h_x=h_y$ at $t=t_N$, respectively. The observation orders of $L_2$ norm and maximum norm are defined by
\begin{equation*}
\begin{array}{ll}
Rate_{\tau}=\log_2(\frac{\lVert \eta(h,2\tau) \rVert}{\lVert \eta(h,\tau) \rVert}), \quad Rate_h=\log_2(\frac{\lVert \eta(2h,4\tau) \rVert}{\lVert \eta(h,\tau) \rVert}).
 \end{array}
\end{equation*}
\begin{equation*}
\begin{array}{ll}
Rate_\tau^\infty=\log_2(\frac{\lVert \eta(h,2\tau) \rVert_{\infty}}{\lVert \eta(h,\tau) \rVert_{\infty}}),\quad Rate_h^\infty=\log_2(\frac{\lVert \eta(2h,4\tau) \rVert_{\infty}}{\lVert \eta(h,\tau) \rVert_{\infty}}).
 \end{array}
\end{equation*}

\begin{exa}\label{ex4.1}
Consider the following two-dimensional Riesz space fractional nonlinear reaction-diffusion equation
\begin{equation*}\label{eq4.1a}
\begin{array}{ll}
\frac{\partial u(x,y,t)}{\partial t}={\kappa}_1 \frac{\partial^\alpha u(x,y,t)}{\partial |x|^\alpha}+{\kappa}_2\frac{\partial^\alpha u(x,y,t)}{\partial |y|^\alpha}+g(x,y,t,u(x,y,t)), \qquad 0 < x,y < 1 , 0 < t \leq 1,\tag{4.1$a$}
 \end{array}
\end{equation*}
with boundary and initial conditions
\begin{equation*}\label{eq4.1b}
\begin{array}{ll}
u(0,y,t)=u(x,0,t)=0,\quad u(x,1,t)=u(1,y,t)=0,\quad 0 \leq x,y \leq 1,0 < t \leq 1,
\end{array}\tag{4.1b}
\end{equation*}
\begin{equation*}\label{eq4.1c}
\begin{array}{ll}
u(x,y,0)=x^4(1-x)^4y^4(1-y)^4,\quad 0 \leq x,y \leq 1,
\end{array}\tag{4.1c}
\end{equation*}
where $1 < \alpha,\beta < 2$, the nonlinear source term $g(x,y,t,u(x,y,t))$ is
\begin{equation*}
\begin{array}{rl}
g(x,y,t,u(x,y,t))=\Bigl( u(x,t) \Bigr)^2-e^{-t}y^4(1-y)^4 \Bigl( x^4(1-x)^4+\frac{\Gamma(5)}{\Gamma(5-\alpha)} \left \{ \kappa_1c_{\alpha}x^{4-\alpha}+\kappa_1c_{\alpha}(1-x)^{4-\alpha}\right \}\\
-4\frac{\Gamma(6)}{\Gamma(6-\alpha)} \left \{ \kappa_1c_{\alpha}x^{5-\alpha}+\kappa_1c_{\alpha}(1-x)^{5-\alpha}\right \}+6\frac{\Gamma(7)}{\Gamma(7-\alpha)} \left \{ \kappa_1c_{\alpha}x^{6-\alpha}+\kappa_1c_{\alpha}(1-x)^{6-\alpha}\right \}\\
-4\frac{\Gamma(8)}{\Gamma(8-\alpha)} \left \{ \kappa_1c_{\alpha}x^{7-\alpha}+\kappa_1c_{\alpha}(1-x)^{7-\alpha}\right \}+\frac{\Gamma(9)}{\Gamma(9-\alpha)} \left \{ \kappa_1c_{\alpha}x^{8-\alpha}+\kappa_1c_{\alpha}(1-x)^{8-\alpha}\right \} \Bigr)\\
\qquad\qquad-e^{-t}x^4(1-x)^4 \Bigl( e^{-t}x^4(1-x)^4y^8(1-y)^8+ \frac{\Gamma(5)}{\Gamma(5-\beta)} \left \{ \kappa_2c_{\beta}y^{4-\beta}+\kappa_2c_{\beta}(1-y)^{4-\beta}\right \}\\
-4\frac{\Gamma(6)}{\Gamma(6-\beta)} \left \{ \kappa_2c_{\beta}y^{5-\beta}+\kappa_2c_{\beta}(1-y)^{5-\beta}\right \}+6\frac{\Gamma(7)}{\Gamma(7-\beta)} \left \{ \kappa_2c_{\beta}y^{6-\beta}+\kappa_2c_{\beta}(1-y)^{6-\beta}\right \}\\
-4\frac{\Gamma(8)}{\Gamma(8-\beta)} \left \{ \kappa_2c_{\beta}y^{7-\beta}+\kappa_2c_{\beta}(1-y)^{7-\beta}\right \}+\frac{\Gamma(9)}{\Gamma(9-\beta)} \left \{ \kappa_2c_{\beta}y^{8-\beta}+\kappa_2c_{\beta}(1-y)^{8-\beta}\right \} \Bigr).
\end{array}
\end{equation*}
The exact solution of the problem $\eqref{eq4.1a}-\eqref{eq4.1c}$ is $$u(x,y,t)=e^{-t}x^4(1-x)^4y^4(1-y)^4.$$
\end{exa}

\renewcommand\thefigure{\thesection.\arabic{figure}}
\renewcommand{\thetable}{\thesection.\arabic{table}}
\begin{table}[!htbp]
\centering
\begin{minipage}[t]{01\textwidth}
\centering
\vspace{-0.8em}
\setlength{\abovecaptionskip}{0pt}
\setlength{\belowcaptionskip}{10pt}
\caption{ Errors and corresponding spatial observation orders of BCIM for $\kappa_1=2$, $\kappa_2=4$.}\label{lab4.1}
\begin{tabular}{*{8}{c}}
\bottomrule

{$\alpha$} & {$\beta$} & {$h$} & {$\tau$} & $\lVert \eta(h,\tau) \rVert_{\infty}$ & $Rate_h^\infty$ & $\lVert \eta(h,\tau) \rVert$ & $Rate_h$  \\
\midrule
$1.1$ & $1.5$ & $\frac{1}{8}$ & $\frac{1}{64}$  & 2.3306e-08 & * & 9.3070e-09 & *  \\
& & $\frac{1}{16}$ & $\frac{1}{256}$  & 1.3729e-09 & 4.085 & 5.4995e-10 & 4.081   \\
 & & $\frac{1}{32}$ & $\frac{1}{1024}$  &  8.1699e-11 & 4.071 & 3.1847e-11 &   4.110    \\
& & $\frac{1}{64}$ & $\frac{1}{4096}$  &  4.7570e-12 &  4.102 & 1.8390e-12 &  4.114    \\
\midrule
\midrule
$1.3$ & $1.7$ & $\frac{1}{8}$ & $\frac{1}{64}$  & 2.8704e-08 & * & 1.1769e-08 & *  \\
& & $\frac{1}{16}$ & $\frac{1}{256}$  & 1.7359e-9 &  4.048 & 7.1648e-10 & 4.038   \\
 & & $\frac{1}{32}$ & $\frac{1}{1024}$  & 1.0488e-10 &  4.049 &  4.2172e-11 & 4.087    \\
& & $\frac{1}{64}$ & $\frac{1}{4096}$  & 5.9342e-12 &  4.144 &  2.3909e-12 &  4.141    \\
\midrule
\midrule
$1.5$ & $1.9$ & $\frac{1}{8}$ & $\frac{1}{64}$  & 3.4947e-08 & * & 1.4841e-08 & *  \\
& & $\frac{1}{16}$ & $\frac{1}{256}$  & 2.1909e-09 & 3.996 & 9.4251e-10 &  3.977   \\
 & & $\frac{1}{32}$ & $\frac{1}{1024}$  &  1.3642e-10 &  4.005 & 5.7485e-11 &  4.035    \\
& & $\frac{1}{64}$ & $\frac{1}{4096}$  &  8.1762e-12 &  4.061 &  3.3763e-12 & 4.090\label{tab4.1}    \\
\midrule
\midrule
$1.8$ & $1.8$ & $\frac{1}{8}$ & $\frac{1}{64}$  &   3.0917e-08 & * & 1.5445e-08 & *  \\
& & $\frac{1}{16}$ & $\frac{1}{256}$  & 1.9047e-09 &  4.021 &  9.6607e-10 & 3.999   \\
 & & $\frac{1}{32}$ & $\frac{1}{1024}$  & 1.1627e-10 & 4.034 & 5.8275e-11 & 4.051    \\
& & $\frac{1}{64}$ & $\frac{1}{4096}$  & 6.8754e-12 &  4.080 &  3.4108e-12 &  4.095    \\
\bottomrule
\end{tabular}
\end{minipage}
\end{table}
\begin{table}[!htbp]
\centering
\begin{minipage}[t]{01\textwidth}
\centering
\vspace{-2em}
\setlength{\abovecaptionskip}{0pt}
\setlength{\belowcaptionskip}{10pt}
\caption{Errors and corresponding temporal observation orders of BCIM for $\kappa_1=2$, $\kappa_2=4$.}\label{tab4.2}
\begin{tabular}{*{8}{c}}
\bottomrule

{$\alpha$} & {$\beta$} & {$h$} & {$\tau$} & $\lVert \eta(h,\tau) \rVert_{\infty}$ & $Rate_\tau^\infty$ & $\lVert \eta(h,\tau) \rVert$ & $Rate_\tau$ \\
\midrule
$1.1$ & $1.5$ & $\frac{1}{200}$ & $\frac{1}{10}$  & 1.8993e-07 & * & 4.9124e-08 & *  \\
& & $\frac{1}{200}$ & $\frac{1}{20}$  &  4.5950e-08 &  2.047 & 1.1901e-08 & 2.045   \\
 & & $\frac{1}{200}$ & $\frac{1}{40}$  &   1.1396e-08 &  2.012 &  2.9524e-09 &  2.011   \\
& & $\frac{1}{200}$ & $\frac{1}{80}$  &  2.8441e-09 &  2.002 & 7.3692e-10 &  2.002   \\
\midrule
\midrule
$1.3$ & $1.7$ & $\frac{1}{200}$ & $\frac{1}{10}$  & 2.6925e-07 & * & 6.9248e-08 & *  \\
& & $\frac{1}{200}$ & $\frac{1}{20}$  &  6.3829e-08 &   2.077 & 1.6430e-08 &  2.075   \\
 & & $\frac{1}{200}$ & $\frac{1}{40}$  &   1.5759e-08 &  2.018 & 4.0577e-09 &  2.018   \\
& & $\frac{1}{200}$ & $\frac{1}{80}$  &  3.9284e-09 &  2.004 &  1.0115e-09 &  2.004   \\
\midrule
\midrule
$1.5$ & $1.9$ & $\frac{1}{200}$ & $\frac{1}{10}$  &  3.9090e-07 & * & 1.0050e-07 & *  \\
& & $\frac{1}{200}$ & $\frac{1}{20}$  &  8.9948e-08 &  2.120 & 2.3066e-08 & 2.123   \\
 & & $\frac{1}{200}$ & $\frac{1}{40}$  &  2.2049e-08 &  2.028 &  5.6540e-09 &  2.028   \\
& & $\frac{1}{200}$ & $\frac{1}{80}$  &  5.4863e-09 &  2.007 & 1.4069e-09 & 2.007   \\
\midrule
\midrule
$1.8$ & $1.8$ & $\frac{1}{200}$ & $\frac{1}{10}$  & 5.3208e-07 & * & 1.3671e-07 & *  \\
& & $\frac{1}{200}$ & $\frac{1}{20}$  &  1.2043e-07 &   2.143 &  3.0554e-08 &  2.162   \\
 & & $\frac{1}{200}$ & $\frac{1}{40}$  &   2.9191e-08 &  2.045 &  7.3857e-09 &  2.049   \\
& & $\frac{1}{200}$ & $\frac{1}{80}$  &  7.2445e-09 &   2.011 & 1.8319e-09 &  2.011   \\
\bottomrule
\end{tabular}
\end{minipage}
\end{table}
\par
We use the method \eqref{eq2.17}-\eqref{eq2.19} (abbr. BCIM) to solve \ref{ex4.1} with several values of $h$, $\tau$ and $\alpha$, $\beta$, respectively, the numerical results are listed in \ref{tab4.1}-\ref{tab4.2}. From the results, we can affirm that the fourth order in spatial direction and the second order in temporal direction are in consistent with our theoretical analysis.
\par
For contrast, we also apply the methods (abbr. ADIM and CDIM) in \cite{space_time_one_order} and \cite{space_time_one_two_order} to solve \ref{ex4.1}, respectively. The numerical results are listed in \ref{tab4.3}-\ref{tab4.5}. Numerical results show that BCIM has the more accurate solutions than ADIM and CDIM with the same conditions. It is obvious to find from \ref{tab4.5} that the three schemes generate the same accuracy for the same temporal grid-size, while the BCIM scheme needs fewer spatial grid points and less CPU time than CDIM and ADIM. This means that BCIM scheme reduces storage requirement and CPU time successfully. All the computations were carried out using MATLAB R2014a software on a HP 288 Pro G2 MT computer, Intel(R) Core(TM) i5-6500, 3.2 GHz CPU machine and 8 GB RAM.
\begin{table}[!htbp]
\centering
\begin{minipage}[t]{01\textwidth}
\centering
\vspace{-1em}
\setlength{\abovecaptionskip}{0pt}
\setlength{\belowcaptionskip}{10pt}
\caption{Errors of numerical methods for $\kappa_1=\kappa_2=0.5$. }\label{tab4.3}
\begin{tabular}{*{8}{c}}
\bottomrule
\multirow{2}*{$\alpha$} &\multirow{2}*{$\beta$} & \multirow{2}*{$h$} & \multirow{2}*{$\tau$} & \multicolumn{2}{c}{BCIM} & \multicolumn{2}{c}{ADIM\cite{space_time_one_order}} \\
\cmidrule(ll){5-6}
\cmidrule(ll){7-8}
&&&& $\lVert \eta(h,\tau) \rVert_{\infty}$  & $\lVert \eta(h,\tau) \rVert$ & $\lVert \eta(h,\tau) \rVert_{\infty}$  & $\lVert \eta(h,\tau) \rVert$ \\
\midrule
$1.1$ & $1.1$ & $\frac{1}{40}$ & $\frac{1}{40}$  &  2.1747e-09 & 5.7548e-10 & 1.7085e-06 & 4.4962e-07  \\
& & $\frac{1}{80}$ & $\frac{1}{80}$  &  5.4848e-10 & 1.4583e-10 & 1.0019e-06 & 2.5965e-07  \\
 & & $\frac{1}{160}$ & $\frac{1}{160}$  &  1.4706e-10 & 3.9499e-11 & 5.5006e-07 & 1.4147e-07  \\
& & $\frac{1}{320}$ & $\frac{1}{320}$  &  4.7129e-11 & 1.3134e-11 & 2.8951e-07 &  7.4208e-08 \\
\midrule
\midrule
$1.5$ & $1.5$ & $\frac{1}{40}$ & $\frac{1}{40}$  &  3.5315e-09 & 9.0144e-10 & 4.6111e-07 & 1.1639e-07  \\
& & $\frac{1}{80}$ & $\frac{1}{80}$  &  8.8246e-10 & 2.2586e-10 & 2.4185e-07 & 6.0717e-08  \\
 & & $\frac{1}{160}$ & $\frac{1}{160}$  &  2.2544e-10 & 5.7922e-11 & 1.2391e-07 & 3.1030e-08  \\
& & $\frac{1}{320}$ & $\frac{1}{320}$  &  6.1618e-11 & 1.6083e-11 & 6.2718e-08 & 1.5688e-08  \\
\midrule
\midrule
$1.9$ & $1.9$ & $\frac{1}{40}$ & $\frac{1}{40}$  &  6.4489e-09 & 1.6269e-09 & 4.7529e-07 & 1.1933e-07  \\
& & $\frac{1}{80}$ & $\frac{1}{80}$  &  1.6102e-09 & 4.0648e-10 & 2.5926e-07 & 6.5003e-08  \\
 & & $\frac{1}{160}$ & $\frac{1}{160}$  &  4.0480e-10 & 1.0224e-10 & 1.3611e-07 & 3.4169e-08  \\
& & $\frac{1}{320}$ & $\frac{1}{320}$  &  1.0376e-10 &  2.6251e-11 & 6.9853e-08 & 1.7559e-08  \\
\bottomrule
\end{tabular}
\end{minipage}
\end{table}

\begin{table}[!htbp]
\centering
\begin{minipage}[t]{01\textwidth}
\centering
\vspace{-1em}
\setlength{\abovecaptionskip}{0pt}
\setlength{\belowcaptionskip}{10pt}
\caption{Errors and corresponding observation orders of numerical methods for $\kappa_1=\kappa_2=1.5$. }\label{tab4.4}
\begin{tabular}{*{8}{c}}
\bottomrule
\multirow{2}*{$\alpha$} &\multirow{2}*{$\beta$} & \multirow{2}*{$h$} & \multirow{2}*{$\tau$} & \multicolumn{2}{c}{BCIM} & \multicolumn{2}{c}{CDIM\cite{space_time_one_two_order}} \\
\cmidrule(ll){5-6}
\cmidrule(ll){7-8}
&&&& $\lVert \eta(h,\tau) \rVert$  & $Rate_h$ & $\lVert \eta(h,\tau) \rVert$  &  $Rate_h$\\
\midrule
$1.1$ & $1.1$ & $\frac{1}{5}$ & $\frac{1}{25}$  &  4.1999e-08 & * & 1.0670e-07 & *  \\
& & $\frac{1}{10}$ & $\frac{1}{100}$  &  2.6112e-09 & 4.008 & 2.7498e-08 & 1.956  \\
 & & $\frac{1}{20}$ & $\frac{1}{400}$  &  1.5119e-10 & 4.110 & 7.0472e-09 & 1.964  \\
& & $\frac{1}{40}$ & $\frac{1}{1600}$  &  9.1834e-12 & 4.041 & 1.7749e-09 & 1.989  \\
\midrule
\midrule
$1.5$ & $1.5$ & $\frac{1}{5}$ & $\frac{1}{25}$  &  6.6479e-08 & * & 1.8701e-07 & *  \\
& & $\frac{1}{10}$ & $\frac{1}{100}$  &   4.3835e-09 & 3.923 & 5.3111e-08 & 1.816  \\
 & & $\frac{1}{20}$ & $\frac{1}{400}$  &  2.5617e-10 & 4.097 & 1.4296e-08 & 1.893  \\
& & $\frac{1}{40}$ & $\frac{1}{1600}$  &  1.4680e-11 & 4.125 & 3.6608e-09 & 1.965  \\
\midrule
\midrule
$1.9$ & $1.9$ & $\frac{1}{5}$ & $\frac{1}{25}$  &  9.8521e-08 & * & 3.2851e-07 & * \\
& & $\frac{1}{10}$ & $\frac{1}{100}$  &  7.1405e-09 & 3.786 & 1.0507e-07 & 1.645  \\
 & & $\frac{1}{20}$ & $\frac{1}{400}$  &  4.4896e-10 & 3.991 & 3.0700e-08 & 1.775  \\
& & $\frac{1}{40}$ & $\frac{1}{1600}$  &  2.7350e-11 & 4.037 & 8.1275e-09 & 1.917  \\
\bottomrule
\end{tabular}
\end{minipage}
\end{table}

\begin{table}[!htbp]
\centering
\begin{minipage}[t]{01\textwidth}
\centering
\vspace{-1em}
\setlength{\abovecaptionskip}{0pt}
\setlength{\belowcaptionskip}{10pt}
\caption{Errors and corresponding CPU time costs of numerical methods for $\kappa_1=\kappa_2=0.5$. }\label{tab4.5}
\begin{tabular}{*{12}{c}}
\bottomrule
\multirow{2}*{$\alpha$} &\multirow{2}*{$\beta$} & \multirow{2}*{$\tau$} & \multicolumn{3}{c}{BCIM} & \multicolumn{3}{c}{CDIM\cite{space_time_one_two_order}} & \multicolumn{3}{c}{ADIM\cite{space_time_one_order}}  \\
\cmidrule(ll){4-6}
\cmidrule(ll){7-9}
\cmidrule(ll){10-12}
&&& $h$ & $\lVert \eta(h,\tau) \rVert_{\infty}$  &CPU(s) & $h$ & $\lVert \eta(h,\tau) \rVert_{\infty}$  &CPU(s) & $h$ & $\lVert \eta(h,\tau) \rVert_{\infty}$  &CPU(s) \\
\midrule
$1.1$ & $1.1$ & $\frac{1}{144}$ & $\frac{1}{12}$  & 2.4853e-09 & 1.480& $\frac{1}{100}$ & 4.4803e-08 & 9.506 & $\frac{1}{144}$ & 6.0451e-07 &16.702  \\
& & $\frac{1}{196}$ & $\frac{1}{14}$  &  1.3567e-09 &  2.372 & $\frac{1}{100}$ & 3.2719e-08 & 12.839 & $\frac{1}{196}$ & 4.5739e-07 & 45.000   \\
 &&  $\frac{1}{256}$ & $\frac{1}{16}$  &  8.0620e-10 &  3.665 & $\frac{1}{100}$ &  2.4841e-08 &  16.607 & $\frac{1}{256}$ & 3.5716e-07 & 103.324  \\
& & $\frac{1}{400}$ & $\frac{1}{20}$  &  3.4227e-10 &  7.418 & $\frac{1}{100}$ &  1.5540e-08 &  25.941 & $\frac{1}{400}$ & 2.3410e-07 & 477.229 \\

& & $\frac{1}{576}$ & $\frac{1}{24}$  & 1.7382e-10 & 13.295& $\frac{1}{100}$ & 1.0471e-08 & 40.944 & $\frac{1}{576}$ & 1.6473e-07 & 2089.920  \\

& & $\frac{1}{784}$ & $\frac{1}{28}$  & 1.0093e-10 & 21.833& $\frac{1}{100}$ & 7.4096e-09 & 48.423 & $\frac{1}{784}$ & 1.2201e-07 & 7282.158  \\
\midrule
\midrule
$1.5$ & $1.5$ & $\frac{1}{144}$ & $\frac{1}{12}$  & 3.5317e-09 & 1.452& $\frac{1}{100}$ & 7.9016e-08 & 8.295 & $\frac{1}{144}$ & 1.3730e-07 & 18.782 \\
& & $\frac{1}{196}$ & $\frac{1}{14}$  &  1.9340e-09 &  2.367 & $\frac{1}{100}$ & 5.7999e-08 & 11.372 & $\frac{1}{196}$ & 1.0161e-07 & 47.309   \\
 &&  $\frac{1}{256}$ & $\frac{1}{16}$  &  1.0980e-09 &  3.686 & $\frac{1}{100}$ &  4.4223e-08 &  14.788 & $\frac{1}{256}$ & 7.8158e-08 & 107.309 \\
& & $\frac{1}{400}$ & $\frac{1}{20}$  &  4.4652e-10 &  7.369 & $\frac{1}{100}$ &  2.7885e-08 &  23.498 & $\frac{1}{400}$ & 5.0297e-08 & 465.193 \\

& & $\frac{1}{576}$ & $\frac{1}{24}$  & 2.0591e-10 & 13.497& $\frac{1}{100}$ & 1.8946e-08 & 33.295 & $\frac{1}{576}$ & 3.5032e-08 & 2020.553  \\

& & $\frac{1}{784}$ & $\frac{1}{28}$  & 1.1094e-10 & 22.479& $\frac{1}{100}$ & 1.3534e-08 & 45.220 & $\frac{1}{784}$ & 2.5783e-08 & 7338.858  \\
\midrule
\midrule
$1.9$ & $1.9$ & $\frac{1}{144}$ & $\frac{1}{12}$  & 5.7978e-09 & 1.450& $\frac{1}{100}$ & 1.4748e-07 & 8.290 & $\frac{1}{144}$ & 1.5039e-07 & 19.943  \\
& & $\frac{1}{196}$ & $\frac{1}{14}$  &  3.2021e-09 &  2.364 & $\frac{1}{100}$ & 1.0932e-07 & 11.539 & $\frac{1}{196}$ & 1.1216e-07 & 47.362   \\
 &&  $\frac{1}{256}$ & $\frac{1}{16}$  &  1.8549e-09 &  3.619 & $\frac{1}{100}$ &  8.4002e-08 &  14.873 & $\frac{1}{256}$ & 8.6739e-08 & 108.434  \\
& & $\frac{1}{400}$ & $\frac{1}{20}$  &  7.6365e-10 &  7.375 & $\frac{1}{100}$ &  5.3645e-08 &  23.170 & $\frac{1}{400}$ & 5.6183e-08 & 477.779 \\

& & $\frac{1}{576}$ & $\frac{1}{24}$  & 3.6296e-10 & 13.281& $\frac{1}{100}$ & 3.6878e-08 & 33.192 & $\frac{1}{576}$ & 3.9275e-08 & 2074.364  \\

& & $\frac{1}{784}$ & $\frac{1}{28}$  & 1.9712e-10 & 21.817& $\frac{1}{100}$ & 2.6671e-08 & 45.235 & $\frac{1}{784}$ &  2.8971e-08 & 7300.367  \\
\bottomrule
\end{tabular}
\end{minipage}
\end{table}

\begin{exa}
Consider the following fractional FitzHugh-Nagumo model\cite{Bueno-Orovio}
\begin{equation*}\label{eq4.2a}
\left \{
\begin{array}{ll}
    \frac{\partial u}{\partial t}=\kappa_1\frac{\partial^{\alpha}u}{\partial|x|^{\alpha}}+\kappa_2\frac{\partial^{\beta}u}{\partial|y|^{\beta}}+u(1-u)(u-\mu)-w,\quad
    (x,y,t)\in (0,2.5)\times (0,2.5)\times(0,T],\\
\frac{\partial w}{\partial t}=\varepsilon(\lambda u -\gamma w-\delta),\quad (x,y,t)\in (0,2.5)\times (0,2.5)\times(0,T],
\end{array}
\right.\tag{4.2$a$}
\end{equation*}
where $\delta = 0, \mu = 0.1,\gamma=1,\varepsilon=0.01,\lambda=0.5$, and $\kappa_1,\kappa_2$ are nonnegative diffusion coefficients.\\
The initial-boundary conditions of \eqref{eq4.2a} are taken as
\begin{equation*}\label{eq4.2b}
\begin{array}{ll}
u(x,y,0)=\left \{
\begin{array}{ll}
    1,\quad (x,y)\in (0,1.25]\times (0,1.25),\\
    0,\quad elsewhere,\tag{4.2b}
\end{array}
\right.
\\\\
w(x,y,0)=\left \{
\begin{array}{ll}
    0,\quad (x,y)\in (0,2.5)\times (0,1.25),\\
    0.1,\quad (x,y)\in (0,2.5)\times [1.25,2.5),
\end{array}
\right.
\end{array}
\end{equation*}
and
\begin{equation*}\label{eq4.2c}
\begin{array}{ll}
u(0,y,t)=u(2.5,y,t)=u(x,0,t)=u(x,2.5,t)=0,\\
w(0,y,t)=w(2.5,y,t)=w(x,0,t)=w(x,2.5,t)=0,
\end{array}
\begin{array}{ll}
0 \leq x,y \leq 2.5,\quad 0 < t\leq T.
\end{array}
\end{equation*}

\end{exa}
Furthermore, BCIM can also be extended to solve the fractional FitzHugh-Nagumo model which is applied for depicting the propagation of the electrical potential in heterogeneous cardiac tissue. In the simulation, we set the parameters $M_1=M_2=200,N=2000,T=1000$, the results of the simulation at $t=1000$ are shown in \ref{fig4.1} and \ref{fig4.2}. We observed that the numerical solution of fractional FitzHugh-Nagumo model is related to the fractional orders $\alpha$ and $\beta$. The more details of fractional FitzHugh-Nagumo model can refer to \cite{Bueno-Orovio}.

\begin{figure}[!htbp]
\centering
\begin{minipage}[t]{01\textwidth}
\centering
\includegraphics[width=12cm,height=5.5cm]{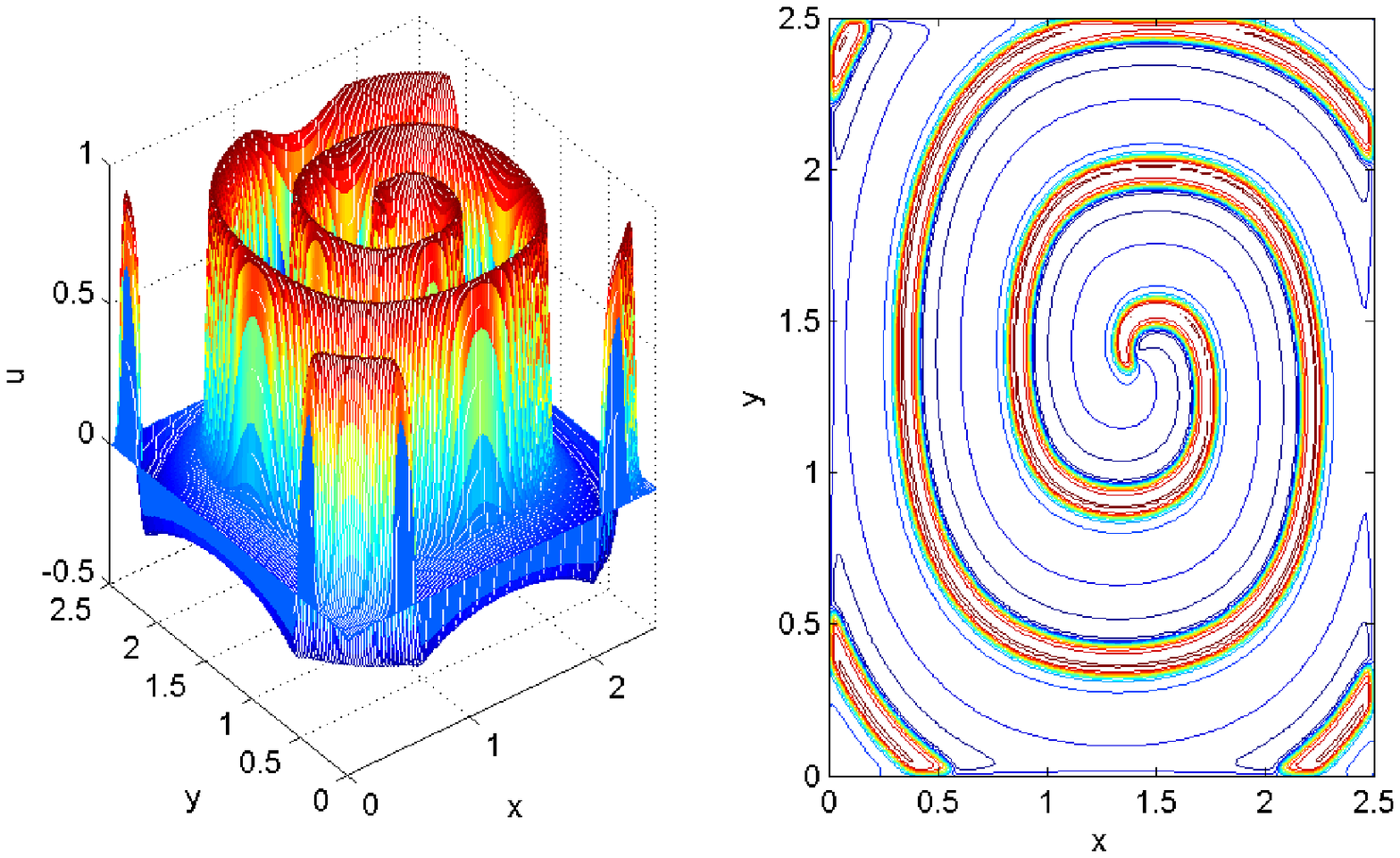}
\caption{\scriptsize ~~Numerical solution of the FitzHugh-Nagumo model with $\kappa_1=\kappa_2=1e-4,\alpha=1.7,\beta=1.7$ at $t=1000$.}\label{fig4.1}
\end{minipage}
\centering
\begin{minipage}[t]{01\textwidth}
\centering
\includegraphics[width=12cm,height=5.5cm]{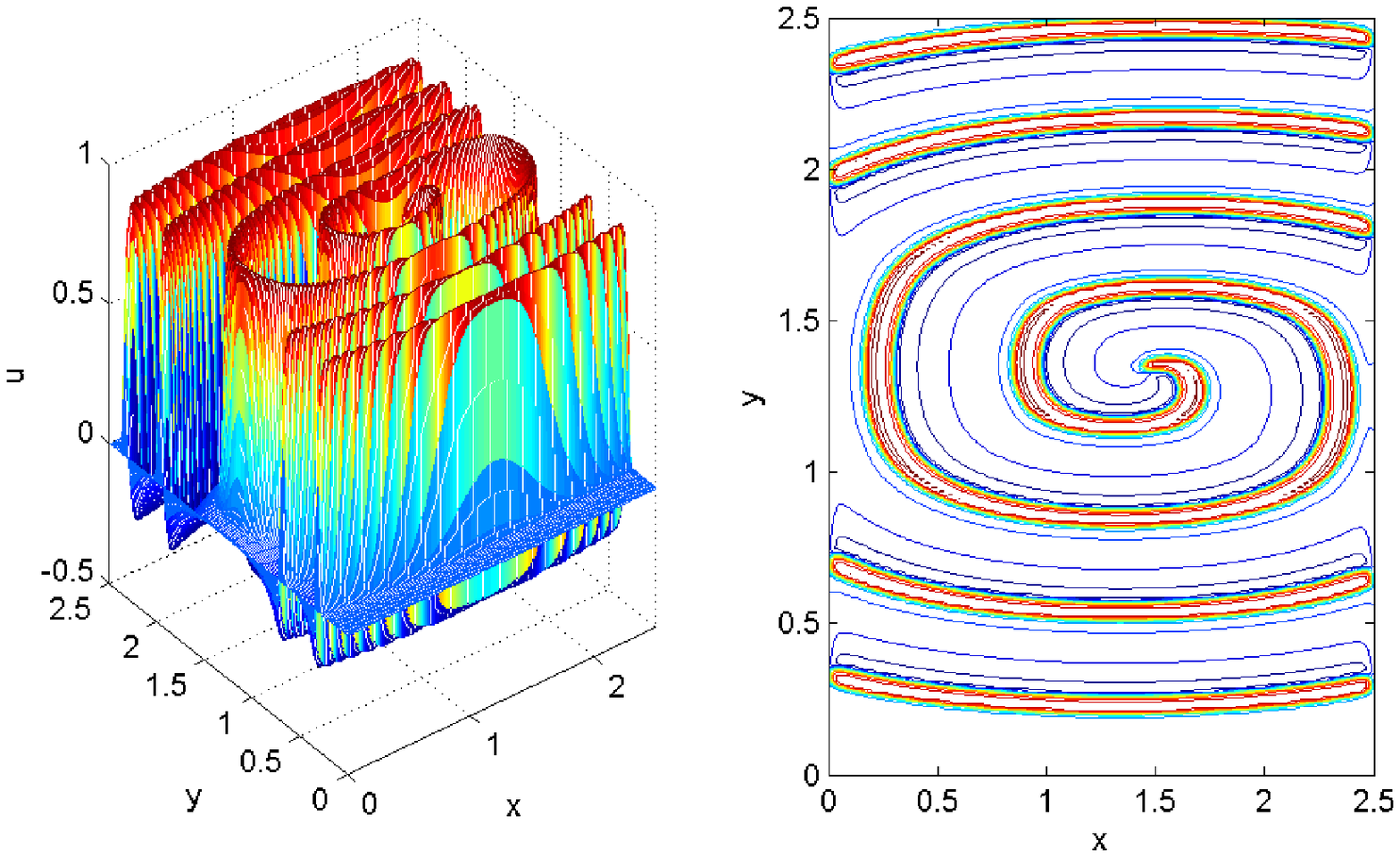}
\caption{\scriptsize ~~Numerical solution of the FitzHugh-Nagumo model with $\kappa_1=\kappa_2=1e-4,\alpha=1.8,\beta=1.5$ at $t=1000$.}\label{fig4.2}
\end{minipage}

\end{figure}

\bigskip

\section{Conclusion}
In this paper, a compact ADI finite difference scheme is constructed for the two dimensional Riesz space fractional nonlinear reaction-diffusion equation based on the linearized approximations for nonlinear source term. It's proved that the proposed method is stable and convergent with second-order temporal accuracy and fourth-order spatial accuracy by energy method. Finally, the numerical tests verified the correctness of the theoretical analysis and effectiveness of the proposed scheme by comparing with numerical schemes in \cite{space_time_one_two_order,space_time_one_order}.

\section*{Acknowledgements}
 The authors would like to express the thanks to the referees for their valuable comments and suggestions.

\section*{Disclosure statement}
No potential conflict of interest was reported by the authors.

\section*{Funding}
This work is supported by National Science Foundation of China (No. 11671343), and Project of Scientific Research Fund of Hunan Provincial Science and Technology Department (No. 2018WK4006).





\begin{thebibliography}{00}

\bibitem{positiveoperator}
X. Zhao, Z. Sun, Z. Hao,
\newblock A fourth-order compact ADI scheme for 2D nonlinear space fractional Schr\"{o}dinger equation,
\newblock SIAM J. Sci. Comput., \textbf{36} (2014), pp. 2865-2886.

\bibitem{Hengfei}
Y. Zhang, H. Ding,
\newblock High-order algorithm for the two-dimension Riesz space-fractional diffusion equation,
\newblock Int. J. Comput. Math., \textbf{94} (2017), pp. 2063-2073.

\bibitem{BDFoperator}
H. Liao, P. Lyu, S. Vong,
\newblock Second-order BDF time approximation for Riesz space-fractional diffusion equations,
\newblock Int. J. Comput. Math., \textbf{95} (2017), pp. 144-158.

\bibitem{BDF_Volterra}
H. Chen, S. Gan, D. Xu, Q. Liu,
\newblock A second-order BDF compact difference scheme for fractional-order Volterra equation,
\newblock Int. J. Comput. Math., \textbf{93} (2015), pp. 1140-1154.


\bibitem{GL}
M. Meerschaert, C. Tadjeran,
\newblock Finite difference approximations for fractional advection-dispersion equations,
\newblock J. Comput. Appl. Math., \textbf{172} (2004), pp. 65-77.

\bibitem{wsGL1}
L. Feng, P. Zhuang, F. Liu, I. Turner, J. Li,
\newblock High-order numerical methods for the Riesz space fractional advection-dispersion equations,
\newblock Comput. Math. Appl., (2016), http://dx.doi.org/10.1016/j.camwa.2016.01.015. Available online 19 February 2016.


\bibitem{wsGL2}
W. Tian, H. Zhou, W. Deng,
\newblock A class of second order difference approximations for solving space fractional diffusion equations,
\newblock Math. Comput., \textbf{84} (2015), pp. 1703-1727.


\bibitem{midpoint}
P. Wang, C. Huang,
\newblock An implicit midpoint difference scheme for the fractional Ginzburg-Landau equation,
\newblock J. Comput. Phys., \textbf{312} (2016), pp. 31-49.

\bibitem{new_generating_functions}
H. Ding, C. Li,
\newblock High-order numerical algorithms for Riesz derivatives via constructing new generating functions,
\newblock J. Sci. Comput., \textbf{71} (2017), pp. 759-784.

\bibitem{define}
A. Kilbas, H. Srivastava, J. Trujillo,
\newblock Theory and Applications of Fractional Differential Equations,
\newblock  Elsevier Science Limited, 2006.

\bibitem{define1}
C. Li, Y. Wu, R. Ye,
\newblock Recent advances in applied nonlinear dynamics with numerical analysis: fractional dynamics, network dynamics, classical dynamics and fractal dynamics with their numerical simulations,
\newblock  World Scientific, 2013.

\bibitem{define2}
C. Li, F. Zeng,
\newblock Numerical methods for fractional calculus,
\newblock Chapman and Hall/CRC, Boca Raton, 2015.

\bibitem{define3}
C. Li, A. Chen,
\newblock Numerical methods for fractional partial differential equations,
\newblock Int. J. Comput. Math., \textbf{95} (2018), pp. 1048-1099.


\bibitem{centeroperator2}
M. Ortigueira,
\newblock Riesz potential operators and inverses via fractional centred derivatives,
\newblock Int. J. Math. Math. Sci., \textbf{2006} (2006) 12 (Article ID 48391).

\bibitem{centeroperator}
C. \c{C}elik, M. Duman,
\newblock Crank-Nicolson method for the fractional diffusion equation with the Riesz fractional derivative,
\newblock J. Comput. Phys., \textbf{231} (2012), pp. 1743-1750.

\bibitem{Yi. Tang}
S. Arshad, W. Bu, J. Huang, Y. Tang, Y. Zhao,
\newblock Finite difference method for time-space linear and nonlinear fractional diffusion equations,
\newblock Int. J. Comput. Math., \textbf{95} (2018), pp. 202-217.



\bibitem{thirdquasicompact}
 H. Zhou, W. Tian, W. Deng,
\newblock Quasi-compact finite difference schemes for space fractional diffusion equations,
\newblock J. Sci. Comput., \textbf{56} (2013), pp. 45-66.



\bibitem{quasicompact}
 Z. Hao, Z. Sun, W. Cao,
\newblock  A fourth-order approximation of fractional derivatives with its applications,
\newblock  J. Comput. Phys., \textbf{281} (2015), pp. 787-805.


\bibitem{space_time_one_order}
F. Liu, S. Chen, I. Turner, K. Burrage, V. Anh,
\newblock Numerical simulation for two-dimensional riesz space fractional diffusion equations with a nonlinear reaction term,
\newblock Cent. Eur. J. Phys., \textbf{11} (2013), pp. 1221-1232.

\bibitem{spectralmethod}
F. Zeng, F. Liu, C. Li, K. Burrage, I. Turner, V. Anh,
\newblock A Crank-Nicolson ADI spectral method for a two-dimensional Riesz space fractional nonlinear reaction-diffusion equation,
\newblock SIAM J. Numer. Anal., \textbf{52} (2014), pp. 2599-2622.


\bibitem{space_time_one_two_order}
F. Liu, P. Zhuang, I. Turner, V. Anh, K. Burrage,
\newblock A semi-alternating direction method for a 2-D fractional FitzHugh-Nagumo monodomain model on an approximate irregular domain,
\newblock J. Comput. Phys., \textbf{293} (2015), pp. 252-263.


\bibitem{FitzHugh_Nagumo}
W. Bu, Y. Tang, Y. Wu, J. Yang,
\newblock Crank-Nicolson ADI Galerkin finite element method for two-dimensional fractional FitzHugh-Nagumo monodomain model,
\newblock Appl. Math. Comput., \textbf{257} (2015), pp. 355-364.


\bibitem{Iyiola}
O. Iyiola, E. Asante-Asamani, K. Furati, A. Khaliq, B. Wade,
\newblock Efficient time discretization scheme for nonlinear space fractional reaction-diffusion equations,
\newblock Int. J. Comput. Math., \textbf{95} (2018), pp. 1274-1291.

\bibitem{Furati}
K. Furati, M. Yousuf, A. Khaliq,
\newblock Fourth-order methods for space fractional reaction-diffusion equations with non-smooth data,
\newblock Int. J. Comput. Math., \textbf{95} (2018), pp. 1240-1256.


\bibitem{Yousuf}
M. Yousuf,
\newblock A second-order efficient L-stable numerical method for space fractional reaction-diffusion equations,
\newblock Int. J. Comput. Math., \textbf{95} (2018), pp. 1408-1422.


\bibitem{XNCaoXCaoLWen}
X. Cao, X. Cao, L. Wen,
\newblock The implicit midpoint method for the modified anomalous sub-diffusion equation with a nonlinear source term,
\newblock J. Comput. Appl. Math., \textbf{318} (2017), pp. 199-210.

\bibitem{Liyunfei}
Y. Li, D. Wang,
\newblock Improved efficient difference method for the modified anomalous sub-diffusion equation with a nonlinear source term,
\newblock  Int. J. Comput. Math., \textbf{94} (2017), pp. 821-840.

\bibitem{Y.J Choi}
 Y. Choi, S. Chung,
\newblock Finite element solutions for the space-fractional diffusion equation with a nonlinear source term,
\newblock Abstr. Appl. Anal., \textbf{2012} (2012), pp. 183-201.

\bibitem{Burrage}
K. Burrage, N. Hale, D. Kay,
\newblock An efficient implicit FEM scheme for fractional-in-space reaction-diffusion equations,
\newblock SIAM J. Sci. Comput., \textbf{34} (2012), pp. 2145-2172.


\bibitem{semi-implicit-difference}
S. Chen, F. Liu, X. Jiang, I. Turner, V. Anh,
\newblock A fast semi-implicit difference method for a nonlinear two-sided space-fractional diffusion equation with variable diffusivity coefficients,
\newblock Appl. Math. Comput., \textbf{257} (2015), pp. 591-601.


\bibitem{W.C.Hong}
H. Choi, S. Chung, Y. Lee,
\newblock Numerical solutions for space-fractional dispersion equations with nonlinear source terms,
\newblock  Bull. Korean Math. Soc., \textbf{47} (2010), pp. 1225-1234.



\bibitem{Aiguo_Xiao_nonlinear_Schr?dinger}
D. Wang, A. Xiao, W. Yang,
\newblock Crank-Nicolson difference scheme for the coupled nonlinear Schr\"{o}dinger equations with the Riesz space fractional derivative,
\newblock J. Comput. Phys., \textbf{242} (2013), pp. 670-681.


\bibitem{Bueno-Orovio}
A. Bueno-Orovio, D. Kay, K. Burrage,
\newblock Fourier spectral methods for fractional-in-space reaction-diffusion equations,
\newblock BIT. Numer. Math., \textbf{54} (2014), pp. 937-954.



\bibitem{Lin}
F. Lin, H. Qu,
\newblock A Runge-Kutta Gegenbauer spectral method for nonlinear fractional differential equations with Riesz fractional derivatives,
\newblock Int. J. Comput. Math., \textbf{96 } (2019), pp. 417-435.

\bibitem{Partially}
W. Hundsdorfe,
\newblock Partially implicit BDF2 blends for convection dominated flows,
\newblock SIAM J. Numer. Anal., \textbf{38} (2001), pp. 1763-1783.


\end{thebibliography}


\section*{References}

\end{document}